\newcommand{\mc}[1]{\mathcal{#1}}
\numberwithin{equation}{section}
\theoremstyle{definition}
\newtheorem{definition}{Definition}[section]
\theoremstyle{remark}
\newtheorem{remark}[definition]{Remark}
\theoremstyle{plain}
\newtheorem{theorem}[definition]{Theorem}
\newtheorem{lemma}[definition]{Lemma}
\newtheorem{proposition}[definition]{Proposition}
\newcommand{\norm}[1]{\left\|#1\right\|}
\newcommand{\abs}[1]{\left|#1\right|}
\newcommand{\ip}[2]{\left<{#1},{#2}\right>}
\newcommand{\br}{\overline}
\newcommand{\vphi}{\varphi}
\newcommand{\cplx}{\mathbb{C}}
\newcommand{\rea}{\mathbb{R}}
\newcommand{\iintq}{\iint_{Q_T}}
\newcommand{\re}{\textnormal{Re}}
\newcommand{\im}{\textnormal{Im}}
\title[Control and stabilization issues for coupled Schr\"odinger system]{Coupled linear Schr\"odinger equations: Control and stabilization results}
\author[Bhandari, Capistrano-Filho, Majumdar, Tanaka]{K. Bhandari$\,^{\dagger}$ \and R. de A. Capistrano-Filho$\,^\ddagger \, ^\star$ \and S. Majumdar$\,^\S$ \and  T. Y. Tanaka$\,^\P$}
\thanks{$^\star$Corresponding author: roberto.capistranofilho@ufpe.br\\
$^{\dagger}$Institute of Mathematics of the  Czech Academy of Sciences, \v{Z}itn\'a 25, 11567 Praha 1, Czech Republic; \\ bhandari@math.cas.cz \\
	$^{\ddagger}$Departamento de Matem\'atica, Universidade Federal de Pernambuco (UFPE), 50740-545, Recife (PE), Brazil;
	roberto.capistranofilho@ufpe.br \\
$^\S$Department of Mathematics, Indian Institute of Technology Bombay, Powai, Mumbai 400076, India; \\
	subratamajumdar634@gmail.com\\
$^\P$Departamento de Matem\'atica, Universidade Federal Rural de Pernambuco (UFRPE), 52171-900, Recife (PE), Brazil;
	thiago.tanaka@ufrpe.br}
\keywords{Boundary control, Kirchhoff conditions, coupled Schrödinger equations, Carleman estimates, rapid stabilization}
\subjclass[2020]{35K20, 35Q55, 93B05, 93B07, 93B52}
\date{\today}
\begin{document}
	
	\begin{abstract} This article presents some controllability and stabilization results for a system of two coupled linear Schr\"odinger equations in the one-dimensional case where the state components are interacting through the  Kirchhoff boundary conditions.  Considering the system in a bounded domain, the null boundary controllability result is shown. The result is achieved thanks to a new Carleman estimate, which ensures a boundary observation. Additionally, this boundary observation together with some trace estimates, helps us to use the Gramian approach, with a suitable choice of feedback law, to prove that the system under consideration decays exponentially to zero at least as fast as the function $e^{-2\omega t}$ for some $\omega>0$.  
	\end{abstract}
	\maketitle
	
	\section{Introduction}
	This work is dedicated to the study of the boundary control problem and stabilization issue of a linear system that appears to model some problems in the context of nonlinear optics. Precisely, our motivation comes from the following system 
	\begin{equation}\label{nonlinear}
		\begin{cases}
i u_t(t,x)+p u_{xx}(t,x)-\theta u(t,x)+\bar{u}(t,x)v(t,x)=0, \quad t \geq 0, \ x \in \mathbb{R}, \\
i \sigma v_t(t,x)+q  v_{xx}(t,x)-\varrho v(t,x)+\frac{a}{2} u^2(t,x)=0, \\
u(0,x)=u_0(x), \quad v(0,x)=v_0(x),
		\end{cases} 
	\end{equation}
where $u$ and $v$ are complex-valued functions and $\theta$, $\varrho$ and $a:=1 / \sigma$ are real numbers representing physical parameters of the system, where $\sigma>0$ and $p, q= \pm 1$. Notice that the system \eqref{nonlinear} is given by the nonlinear coupling of two dispersive equations of Schrödinger type through the quadratic terms.
	
	There are some physical meanings for the previous system, as mentioned before. For example, the complex functions $u$ and $v$ represent amplitude packets of an optical wave's first and second harmonic, respectively. The values of $p$ and $q$ depend on the signals provided between the scattering/diffraction ratios, and the positive constant $\sigma$ measures the scaling/diffraction magnitude indices. For details about this system, the authors suggested the references \cite{MeChTo,DeSalvo,KaSu}, and the references therein. 
	
		Concerning the mathematical context, most of the work related to the system \eqref{nonlinear} is devoted to proving the well-posedness of the Cauchy problem in $\mathbb{R}^n$, for $n\in[1,6]$ or in a periodic framework $\mathbb{T}$.  For example, in \cite{HaOzTa} the authors showed the local well-posedness for the Cauchy problem \eqref{nonlinear} on the spaces $L^2\left(\mathbb{R}^n\right) \times L^2\left(\mathbb{R}^n\right)$ for $n \leq 4$ and $H^1\left(\mathbb{R}^n\right)$ $\times H^1\left(\mathbb{R}^n\right)$ for $n \leq 6$.  About qualitative properties of Cauchy problem \eqref{nonlinear}, the case where $p=q=1$ was studied in \cite{AnLi} for initial data $u_0, v_0$ in the same periodic Sobolev space $H^s(\mathbb{T})$. We also cite that in \cite{barbosa}, the author studied the well-posedness of the Cauchy problem associated with the coupled Schr\"odinger equations with quadratic nonlinearities. He obtained the local well-posedness for data in Sobolev spaces with low regularity. Finally, the authors suggest the reference \cite{LiHa} for the recent progress on nonlinear Schr\"odinger systems with quadratic interactions. 
		
		In the context of the control theory, no author attempted to show controllability results in bounded domains for the system \eqref{nonlinear}. Due to this fact, our motivation is to present the boundary control results for the linear system associated with \eqref{nonlinear} posed in a bounded domain, giving a necessary first step in the direction to prove the nonlinear results for the system \eqref{nonlinear}. 
		
\subsection{Problem setting}
As mentioned before, our motivation in this work is to present, as a first step, the control results to the linear Schr\"odinger system associated with \eqref{nonlinear}. More precisely, considering $T>0$ be any finite time and $\Omega=(0,1)$, we define $Q_T: = (0,T)\times \Omega$ and $\Sigma_T := (0,T)\times \partial \Omega$. So, we will study the boundary controllability of the following linearized system
	\begin{equation}\label{linear}
		\begin{cases}
			iu_t+  \gamma_1 u_{xx}- \alpha_1 u  = 0 &\text{ in } Q_T,\\
			i\sigma v_t + \gamma_2 v_{xx}-\alpha_2 v = 0 & \text{ in } Q_T,\\
			u(0,x)=u_0(x), \ v(0,x)=v_0(x) & \text{ in } \Omega,
		\end{cases} 
	\end{equation}
	where the constants $\sigma,\gamma_1, \gamma_2, \alpha_1, \alpha_2$ are positive and $(u_0,v_0)$ are given initial data in certain spaces which will be specified later.  We will consider the system \eqref{linear} with the so-called \textit{Kirchhoff boundary condition} at the right spatial point $x=1$:
\begin{equation}\label{boundary-1}
		\begin{cases}
			u(t,1)=v(t,1)   \ & \text{in } (0,T), \\
			\gamma_1 u_x(t,1)+ \frac{\gamma_2}{\sigma} v_x(t,1)+\alpha u(t,1)=0 \ & \text{in } (0,T),
		\end{cases}
	\end{equation}
	with positive parameter $\alpha$. 
	 
	Here, the boundary control $h\in L^2(0,T)$ acts 
	 either on the component $u$ or  $v$ at the left spatial point $x=0$; to be more precise we set
	\begin{subequations} 
		\begin{align} \label{control-u}
			\text{either } \ \ u(t,0) = h(t) , \ \ v(t,0) =0 \quad & \text{ in } (0,T) , \\
			\label{control-v} 
	\text{or } \ \   u(t,0) = 0 , \ \ v(t,0) = h(t) \quad & \text{ in } (0,T) .
		\end{align}
	\end{subequations}
	So, the first goal of this article is to answer the following null controllability  problem: 
		
	\vspace{0.2cm}
	\noindent\textbf{Problem $\mathcal{A}$:} Given $T>0$, $(u_0,v_0)$ in a certain space ${X}$, can one find an appropriate control input $h$ such that the corresponding solution $(u,v)$ of \eqref{linear} with boundary conditions \eqref{boundary-1}-\eqref{control-v} (or \eqref{boundary-1}-\eqref{control-u}) satisfies
\begin{equation}\label{null}
(u(0,x),v(0,x))=(u_0(x),v_0(x)) \quad \text{and} \quad (u( T,x),v(T,x))=(0,0),  \ \ \forall x \in \Omega \  ?
\end{equation}
	
 If we can positively answer the previous question, an interesting problem is to study the boundary stabilization problem for the system  \eqref{linear} with boundary conditions \eqref{boundary-1} and with a single boundary control force exerted on the component $v$ (or $u$), namely \eqref{control-v} (or \eqref{control-u}). In this context, the second main problem in this work treats the following stabilization issue: 
	
	\vspace{0.2cm}
	\noindent\textbf{Problem $\mathcal{B}$:} Can we construct a stationary feedback law $h(t)$, of the form ${F}_{\omega}(u(t,\cdot), v(t,\cdot))$, such that the solution of the closed-loop system \eqref{linear} with boundary condition \eqref{boundary-1}-\eqref{control-v} (or \eqref{boundary-1}-\eqref{control-u}) decays exponentially to zero at any prescribed decay rate?
		\vspace{0.2cm}
		
		An important point to answer these questions is that we need to assume the parameter $\sigma$, in the second equation of \eqref{linear}, be a positive number and the choice of $\sigma$ will play a crucial role in deducing the required controllability result, which will be discussed further up. Moreover, we mention that now, the second problem will be called the \textit{rapid stabilization problem}.
		
		Before giving details about the main results of the article and some important facts, let us give a brief history of the control problem for the Schrödinger type systems, as well as some references to the rapid stabilization issues of partial differential equations (PDEs). 
		
		\subsection{Literature review}  We are not aware of any results for systems where the coupling is given at the Kirchhoff boundary condition for the Schr\"odinger type systems, as is our case. However, concerning the coupled (internal) structure in the equation, there are several results considering the cascade system for Schr\"odinger equations, which we would like to mention. 

\vspace{0.1cm}

 We warn ourselves that this is only a small sample of the results concerning the control, stabilization problems, and some methods. We strongly encourage the reader to see the references cited above as well as the references therein for more details about all of these problems. 
				
	\subsubsection{Control results for Schr\"odinger equation}It is well known that control properties for a single Schr\"{o}dinger equation have received a lot of attention in the last decades (see, e.g., \cite{Zuazua,Laurent} for an excellent review of the contributions up to 2014).  There is an ongoing effort to show new control results for this equation, and this effort is giving significant progress for control properties. So, in this spirit, we can cite \cite{KoLo,Miller,Phung,RaTaTeTu,RoZhaSIAM,RoZhaMMM} and the references therein for control issues or \cite{BaPu,CaGa,LaTriZhang,MeOsRo,YuYa} and the references therein for Carleman estimates and their applications to inverse problems. 

\subsubsection{Control results for coupled system} We are not aware of any results for systems where the coupling is given at the Kirchhoff boundary condition for the Schr\"odinger type systems, as is our case. However, concerning the coupled (internal) structure in the equation, there are several results considering the cascade system for Schr\"odinger equations, which we would like to mention.  We advertise that this is only a small sample of the results concerning the control and stabilization problem. We strongly encourage the reader to see the previous references and the references therein for more details about all of these problems. 

We first quote that Fatiha's articles \cite{AB2001,AB2003} are the first to establish observability and controllability results for coupled systems with less than the number of coupled equations, moreover, we mention that in these articles the author deals with symmetrically coupled systems such as the system \eqref{linear}. Moreover, the articles, \cite{AB2012-2,FaLe,AB2015} deal with cascade systems and also with coupled parabolic or diffusive systems (including Schrödinger coupled systems) as well as \cite{ABL2011}. We infer to the reader to see also the reference therein.  Additionally, we mention that controllability results for systems of parabolic equations are reviewed \cite{AmBeGonTe}.

 Concerning systems of hyperbolic equations, we cite the works \cite{Fa,DeRoLe,ABCO2017}. There and the references therein, the reader can find results about the controllability of two coupled wave equations with only one control, under the hypothesis of the geometric control condition and results of exact controllability of $n \times n$ first order one-dimensional quasi-linear hyperbolic systems by $m < n$ internal controls that are localized in space in some part of the domain.

More recently, in \cite{RoTe} a boundary controllability result is shown for a Schr\"odinger cascade type system with periodic boundary conditions. This result is obtained as a consequence of the controllability result for a cascade system of two wave equations.  We also refer to the work \cite{LoMeTe}, where the authors studied the null controllability of a linear system formed by two Schr\"odinger, controlling only one of them using Carleman estimates. Also, pay attention to the survey \cite{AB2012-1} where stabilization, observability, and control of coupled systems with a reduced number of controls are presented.

Finally, the first author showed in a recent work \cite{bhandari2021boundary} the boundary controllability of some $2 \times 2$ one-dimensional parabolic systems with both the interior and boundary couplings: The interior coupling is chosen to be linear with a constant coefficient while the boundary one is considered through some Kirchhoff-type condition at one end of the domain.  Additionally, considering two uncoupled wave equations with potentials on an interval, in \cite{RoTeb}, the authors established a Carleman estimate for wave systems with simultaneous boundary control, giving a boundary controllability result for uncoupled wave equations.

\subsubsection{Rapid stabilization of PDEs}  For rapid stabilization issues, in recent years, some abstract methods \cite{Kom-1997,Urquiza,Vest} have been developed to obtain answers considering linear PDEs. The method is based on the Gramian approach and the Riccati equations, and several authors employed during the last few years this approach. For example, we can cite, \cite{CeCr} for the KdV equation in a bounded domain, \cite{CaCeGa} for the KdV-KdV equation with only one boundary feedback acting, \cite{JaKo} for one-dimensional Schrödinger equation and of the beam and plate equations by moving or oblique observations and, additionally, for vibrating strings and beams, we can refer to \cite{Bar-09}.

\subsection{Main results and further comments} We are now in a position to give comments on our main contribution to this article. 
 Consider, now on, the space 
\begin{align}\label{space-H} 
	\mathcal{H}=\big\{(u_1, u_2)\in [H^1(\Omega)]^2 \mid  u_1(0)=u_2(0)=0, \ u_1(1)=u_2(1) \big\},
\end{align}
as the natural space for belonging of the initial data associated with \eqref{linear} with the following norm
\begin{align}\label{def-norm-H} 
	\norm{(u_1,u_2)}_{\mc{H}}=\bigg(\int_\Omega \left( |u_1^\prime(x)|^2 + |u_2^\prime(x)|^2\right) dx\bigg)^{\frac{1}{2}} ,
\end{align}   
and the associated inner product defined by 
\begin{equation*}
	\ip{(u_1,u_2)}{(v_1,v_2)}_{\mc{H}}=\re \int_\Omega u_1^\prime(x)\br{v_1^\prime(x)} dx  + \re \int_\Omega u_2^\prime(x)\br{v_2^\prime(x)} dx,
\end{equation*}
for any $(u_1, u_2), \, (v_1, v_2)\in \mc{H}$. Finally, we denote $\mc{H}^\prime$ as the dual space of $\mc{H}$ with respect to the pivot space  $[L^2(\Omega)]^2$. 

The first result of our work gives the control problem for the system \eqref{linear} with boundary conditions \eqref{boundary-1}-\eqref{control-v}, that is when the control is acting on the second component. Precisely, considering these boundary conditions, due to a new Carleman estimate with boundary observation, the following result is verified. 

\begin{theorem}\label{th-main} Let the set
\begin{align}\label{Set-sigma}
	\mathfrak{S}:  = \left\{  \sigma >0 \mid  \sigma = \frac{\kappa \gamma_2}{\gamma_1}, \ \ \kappa>3      \right\} ,
\end{align}
where $\gamma_1, \gamma_2>0$ are as appearing in \eqref{linear}.  For any $T>0$, initial data $\left(u_0, v_0\right) \in \mathcal{H}'$ and parameters $\gamma_1, \gamma_2, \alpha_1, \alpha_2, \alpha$, and for any $\sigma \in \mathfrak{S}$,  there exists a  control $h \in L^2(0,T)$ such that the solution $(u, v)$ to the system \eqref{linear} with boundary conditions \eqref{boundary-1}-\eqref{control-v}
 satisfies \eqref{null}.
\end{theorem}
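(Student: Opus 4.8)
The plan is to prove Theorem~\ref{th-main} by the Hilbert Uniqueness Method (HUM), reducing the null controllability problem~$\mathcal{A}$ to an observability inequality for the adjoint of \eqref{linear}-\eqref{boundary-1}-\eqref{control-v} and then establishing that inequality through the new Carleman estimate. Writing \eqref{linear} abstractly as $y_t=\mathcal{A}y+\mathcal{B}h$ with $y=(u,v)$ and the scalar boundary control $h$ entering via \eqref{control-v}, a standard transposition argument (multiply the two equations by adjoint unknowns $(\varphi,\psi)$, integrate over $Q_T$, and use \eqref{boundary-1}-\eqref{control-v}) shows that null controllability with data $(u_0,v_0)\in\mathcal{H}'$ is equivalent to an observability inequality of the form
$$\norm{(\varphi^T,\psi^T)}_{\mathcal{H}}^2 \le C \int_0^T \abs{\gamma_2\,\psi_x(t,0)}^2 \, \d t$$
for every solution $(\varphi,\psi)$ of the adjoint system, which is again a coupled Schr\"odinger system run backward from $t=T$ with data $(\varphi^T,\psi^T)\in\mathcal{H}$ and the transposed Kirchhoff/Dirichlet conditions. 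Since $i\partial_t+\gamma_j\partial_{xx}$ is, up to time reversal, formally self-adjoint, the adjoint problem has the same interior structure as \eqref{linear}; because $u(t,0)=0$ in \eqref{control-v} carries no control, the only surviving boundary pairing at $x=0$ is between $h$ and the co-normal trace of the component dual to $v$, which is why the observation involves $\psi_x(t,0)$ alone. Setting up this equivalence also requires well-posedness of \eqref{linear} in $\mathcal{H}'$ and of the adjoint in $\mathcal{H}$, which I would obtain from semigroup theory after checking that the damping term $\alpha u(t,1)$ with $\alpha>0$ in \eqref{boundary-1} renders the generator dissipative.

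The heart of the matter is then the Carleman estimate for the adjoint system. I would choose a weight $e^{2s\theta(t)\beta(x)}$ with a time factor $\theta$ blowing up as $t\to 0^+,T^-$ and a spatial profile $\beta$ adapted to observation from $x=0$, apply the usual conjugated-operator splitting to each Schr\"odinger component \emph{separately}, and add the two estimates. Because the coupling is purely at $x=1$, the interior contributions decouple and produce the expected weighted bound
$$s\iintq e^{2s\theta\beta}\big(\abs{\varphi_x}^2+\abs{\psi_x}^2\big) + s^3\iintq e^{2s\theta\beta}\big(\abs{\varphi}^2+\abs{\psi}^2\big),$$
so the genuine difficulty lives entirely in the boundary terms generated by the integrations by parts in $x$. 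At $x=1$ these terms couple $\varphi_x(t,1)$ and $\psi_x(t,1)$, and I would eliminate them using the transposed Kirchhoff relations $\varphi(t,1)=\psi(t,1)$ and $\gamma_1\varphi_x(t,1)+\tfrac{\gamma_2}{\sigma}\psi_x(t,1)+\cdots=0$. This collapses the $x=1$ contribution to a single quadratic form in, say, $\psi_x(t,1)$ whose coefficient depends on $\gamma_1,\gamma_2,\sigma$; the constraint $\sigma=\kappa\gamma_2/\gamma_1$ with $\kappa>3$ is precisely what forces that coefficient to have the sign needed to absorb (or discard) the term. After this reduction the only boundary term left is the co-normal trace at $x=0$, giving a weighted observability estimate with the information about the unobserved component $\varphi$ transmitted through the Kirchhoff node.

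The main obstacle is exactly this boundary bookkeeping at $x=1$: with purely boundary coupling and a single observation on one component, coercivity of the resulting boundary quadratic form is what pins down the admissible range of $\sigma$, i.e. the set $\mathfrak{S}$ and the threshold $\kappa>3$. Once the weighted Carleman estimate holds, I would conclude by (i) restricting the time integral to a compact subinterval where $\theta$ is bounded and propagating the estimate to $t=T$ via the energy (dis)sipation of the adjoint system, so as to recover $\norm{(\varphi^T,\psi^T)}_{\mathcal{H}}$ on the left-hand side, and (ii) invoking the boundary/trace estimates announced in the abstract to bound the weighted co-normal trace by $\int_0^T\abs{\gamma_2\psi_x(t,0)}^2\,\d t$. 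This yields the observability inequality above, and the HUM duality then produces a control $h\in L^2(0,T)$ driving $(u_0,v_0)$ to rest at time $T$, which is the assertion of Theorem~\ref{th-main}.
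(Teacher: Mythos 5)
Your overall strategy --- HUM/transposition reducing null controllability to a boundary observability inequality for the adjoint system, proved via a Carleman estimate whose only surviving observation is the trace of the $v$-component at $x=0$, then recovering $\norm{\varphi(0)}_{\mathcal H}$ (equivalently $\norm{\varphi(T)}_{\mathcal H}$, by conservation of energy) --- is exactly the paper's route. The gap is in the Carleman estimate itself. You propose a \emph{single} spatial profile $\beta$ for both components and claim the $x=1$ boundary contributions collapse to a quadratic form in the traces $\psi_x(t,1),\psi(t,1)$ whose coercivity pins down $\mathfrak S$. That bookkeeping misses the decisive term: integrating $M_1\psi_j\overline{M_2\psi_j}$ by parts produces, at $x=1$, a term of the form
\begin{equation*}
 s\lambda\Big(c_1\gamma_1+c_2\tfrac{\gamma_2}{\sigma}\Big)\,\im\int_0^T \xi(t,1)\,\psi(t,1)\,\overline{\psi_{t}(t,1)}\,dt ,
\end{equation*}
where $c_j=\beta_j'(1)$. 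This is not a quadratic form in the spatial traces --- it contains the \emph{time derivative} of the trace, $\im(\psi\overline{\psi_t})$ is not a total time derivative, and the term cannot be absorbed by the interior leading terms or discarded by a sign argument. With one weight ($c_1=c_2=c$) its coefficient is $c(\gamma_1+\gamma_2/\sigma)\neq 0$ for all positive parameters, so your scheme stalls here no matter how $\sigma$ is chosen.

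The paper's fix, which your proposal does not contain, is to use \emph{two distinct} weights $\beta_1(x)=2+(x-1)$ and $\beta_2(x)=2-\kappa(x-1)$, equal at $x=1$ but with slopes $c_1=1$ and $c_2=-\kappa$, and then to impose $\sigma=\kappa\gamma_2/\gamma_1$ precisely so that $c_1\gamma_1+c_2\gamma_2/\sigma=0$ and the offending term vanishes identically. The threshold $\kappa>3$ plays a separate, secondary role: after expressing $\gamma_1\varphi_{1,x}(t,1)$ through the Kirchhoff relation and applying Young's inequality with three terms, the remaining $x=1$ boundary quadratic forms carry coefficients proportional to $|c_2|-1$ and $|c_2|-3$, which must be positive. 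So your intuition that $\sigma$ governs a sign condition at the Kirchhoff node captures only this second mechanism, not the essential cancellation; as written, the proof would fail at the $\overline{\psi_t(t,1)}$ boundary term.
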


Since we need to prove an observability inequality (for the associated adjoint system) to give the proof of the previous theorem, naturally, the Problem $\mathcal{B}$ seems reachable. The next result gives, for the coupled Schr\"odinger equation \eqref{linear}, the following positive answer for the rapid stabilization problem. 

\begin{theorem}\label{th-main-1}
	 Let any 
	  parameters $\gamma_1, \gamma_2, \alpha_1, \alpha_2, \alpha$ be given positive.
	Then, for the same choices of $\sigma$ as in Theorem \ref{th-main},  there exists a continuous linear map $ {F}_{\omega}: \mathcal{H} \to \mathbb{C}$ and a positive constants $C$ and $\omega$, such that for every $\left(u_0, v_0\right) \in \mathcal{H}$, the solution $(u, v)$ of the closed-loop system \eqref{linear} with boundary conditions \eqref{boundary-1}-\eqref{control-v}, with $h(t)={F}_{\omega}(u(t,\cdot), v(t,\cdot))$ satisfies
\begin{equation}\label{ex_p}
\|(u(t), v(t))\|_{\mathcal{H}} \leq C e^{-2 \omega t}\left\|\left(u_0, v_0\right)\right\|_{\mathcal{H}}, \quad \forall t \geq 0.
\end{equation}
\end{theorem}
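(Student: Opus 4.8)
The plan is to recast the closed-loop problem in the abstract Gramian framework of Urquiza and Vest \cite{Urquiza,Vest,Kom-1997}, in which \eqref{ex_p} is obtained from a feedback built out of a weighted controllability Gramian. First I would write \eqref{linear} with \eqref{boundary-1}--\eqref{control-v} as an abstract system $\dot U=\mathcal{A}U+\mathcal{B}h$ on the energy space $\mathcal{H}$, where $U=(u,v)$, the unbounded operator $\mathcal{A}$ encodes the two Schr\"odinger operators together with the domain constraints coming from the Kirchhoff conditions \eqref{boundary-1} and the homogeneous Dirichlet condition at $x=0$, and $\mathcal{B}\colon\cplx\to\mathcal{H}'$ is the boundary control operator associated with $v(t,0)=h(t)$. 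The structural fact to establish here is that, after the rescaling that symmetrizes the two equations, $\mathcal{A}$ is skew-adjoint for an inner product equivalent to \eqref{def-norm-H} — the node term $\alpha\abs{u(t,1)}^2$ produced by the Kirchhoff condition being absorbed into the energy so that all boundary contributions in $\re\ip{\mathcal{A}U}{U}_{\mathcal{H}}$ cancel. By Stone's theorem $\mathcal{A}$ then generates a $C_0$-group of isometries $\{S(t)\}_{t\in\rea}$, and I would compute the adjoint control operator $\mathcal{B}^*\colon\mathcal{H}\to\cplx$, which selects the relevant boundary trace of the adjoint state.

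The method needs two inequalities for $\mathcal{B}^*$ relative to $\{S(t)^*\}$. The first is admissibility (the direct, or trace, inequality) $\int_0^T\abs{\mathcal{B}^*S(t)^*\Phi}^2\,dt\le C\norm{\Phi}_{\mathcal{H}}^2$, which is exactly the hidden-regularity trace estimate announced in the abstract and should follow from multiplier and trace arguments applied to the adjoint problem. The second is the observability inequality $\norm{\Phi}_{\mathcal{H}}^2\le C\int_0^T\abs{\mathcal{B}^*S(t)^*\Phi}^2\,dt$, which is precisely the boundary observability underlying Theorem \ref{th-main}: since the Schr\"odinger dynamics are time-reversible, the null controllability of Theorem \ref{th-main} is equivalent to exact controllability and hence to this estimate. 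Thus both hypotheses of the abstract theorem are inherited from the controllability analysis already carried out.

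With these at hand, for the prescribed rate I would introduce the Gramian $\Lambda_\omega\in\mathcal{L}(\mathcal{H})$ by
\[
\ip{\Lambda_\omega\Phi}{\Psi}_{\mathcal{H}}=\re\int_0^\infty e^{-4\omega t}\,\mathcal{B}^*S(t)^*\Phi\;\overline{\mathcal{B}^*S(t)^*\Psi}\,dt ,
\]
the weight being calibrated so that the resulting decay exponent is $2\omega$ as in \eqref{ex_p}. Admissibility makes the integral convergent and $\Lambda_\omega$ bounded and self-adjoint, while observability makes it coercive; hence $\Lambda_\omega$ is an isomorphism of $\mathcal{H}$, and I set $F_\omega:=-\mathcal{B}^*\Lambda_\omega^{-1}$, a continuous linear map $\mathcal{H}\to\cplx$. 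The abstract result of \cite{Urquiza,Vest} then gives that the closed-loop generator $\mathcal{A}_\omega=\mathcal{A}-\mathcal{B}\mathcal{B}^*\Lambda_\omega^{-1}$ generates a strongly continuous semigroup $\{S_\omega(t)\}_{t\ge0}$ with $\norm{S_\omega(t)}_{\mathcal{L}(\mathcal{H})}\le Ce^{-2\omega t}$, which is exactly \eqref{ex_p}; it then remains to verify that $h(t)=F_\omega(u(t,\cdot),v(t,\cdot))$ is an admissible feedback and that the closed-loop problem is well posed in $\mathcal{H}$.

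The main obstacle I anticipate is the functional-analytic bookkeeping needed to fit the single, unbounded, Kirchhoff-coupled boundary control into this framework: namely, checking that $\mathcal{A}$ is genuinely skew-adjoint in $\mathcal{H}$ once the $\alpha$-node term is included, identifying $\mathcal{B}^*$ precisely, and reconciling the duality — the controllability of Theorem \ref{th-main} being stated in $\mathcal{H}'$ (dual with respect to the pivot $[L^2(\Omega)]^2$) while the stabilization is sought in $\mathcal{H}$ — so that the admissibility and observability inequalities are the exact hypotheses of the Gramian theorem. Establishing the trace (admissibility) estimate for the coupled adjoint system, and ensuring the weight in $\Lambda_\omega$ indeed yields the rate $2\omega$, are the quantitative points where the argument must be made rigorous; the decay itself is then automatic from \cite{Urquiza,Vest}.
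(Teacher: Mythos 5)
Your proposal is correct and follows essentially the same route as the paper: the abstract Gramian method of Komornik--Urquiza--Vest, with (H1) from skew-adjointness of $\mathcal{A}$, (H3) from a multiplier/trace estimate for the adjoint system, (H4) from the Carleman-based boundary observability behind Theorem \ref{th-main}, and the feedback $F_\omega=-\mathcal{B}^*\Lambda_\omega^{-1}$. The only cosmetic differences are that in the paper the $\alpha$-node term contributes a purely imaginary boundary term whose real part vanishes outright (no modification of the inner product is needed), and the Gramian weight is $e^{-2\omega t}$ rather than $e^{-4\omega t}$, which only renormalizes the (arbitrary) rate $\omega$.
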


\begin{remark} In what concerns our main results, Theorems \ref{th-main} and \ref{th-main-1}, the following remarks are worth mentioning:
\begin{itemize} 
\item As usual in the literature the answer for the Problem $\mathcal{A}$, that is, Theorem \ref{th-main}, is shown by using the Hilbert Uniqueness Method introduced by Lions \cite{Lions} and the classical duality theory of Dolecki and Russell \cite{DolRus-1977}. For that, it is essential to prove a suitable observability inequality with boundary observation, and to do so, in the present article, we prove a new Carleman estimate for the associated adjoint system to \eqref{linear}-\eqref{boundary-1}-\eqref{control-v}.  
\vspace{0.1cm}
\item An important fact is that our approach, that is, to design a feedback law to stabilize the closed-loop system \eqref{linear} with boundary conditions \eqref{boundary-1}-\eqref{control-v}, was introduced by Lukes \cite{Lukes} and Kleinman
\cite{Kleinman} for the finite-dimensional systems. Later on, Slemrod \cite{Slemrod}, adapted the result to improve the stabilization for infinite-dimensional systems with bounded control operators.
\vspace{0.1cm}
\item We study the control and stabilization problems for the system \eqref{linear} with boundary conditions \eqref{boundary-1}-\eqref{control-v}, however, we point out that a similar analysis can be performed if we instead consider \eqref{boundary-1}-\eqref{control-u}.
\vspace{0.1cm} 
\item  As mentioned before, in \cite{bhandari2021boundary} the authors showed the boundary null-controllability properties for 1-D coupled parabolic systems, there are differences between our work and the previous one. The first one is that, in our case, the choice of such $\sigma\in\mathfrak{S}$ in Theorem \ref{th-main} is important to deduce the required controllability result via Carleman estimate;  for more details, we refer to Remark \ref{Remark-sigma}.  Another interesting point is that we can employ the classical Gramian approach \cite{Kom-1997,Urquiza,Vest}, giving the proof of Theorem \ref{th-main-1}, and consequently, answering the Problem $\mathcal{B}$, which was not achieved in this work.
\vspace{0.1cm}
\item We mention that we are interested in giving control results for the system \eqref{nonlinear}, however, due to the structure of the nonlinearities the well-posedness problem is an open issue. So, in our case, we just give the necessary first step to understand this system.
\end{itemize}
\end{remark}

\subsection{Structure of the article} Our work is composed of five parts, including the introduction. In Section \ref{sec2}, the boundary controllability is considered. We obtain a new Carleman estimate with boundary observation, which is the key point to prove the observability inequality. So, with this observability in hand, the Theorem \ref{th-main} is verified. In Section \ref{sec3}, we recall Urquiza's approach and use it to achieve the second main result of the article, i.e.,  Theorem \ref{th-main-1}. Section \ref{sec4} is devoted to presenting some comments and open issues. Finally, in Appendix \ref{apx}, we present an overview of the well-posedness results, for the direct and adjoint systems associated with \eqref{linear}. Additionally, a key lemma, essential to prove the rapid stabilization result, is proved in Appendix \ref{apx2}.


	\section{Boundary controllability}\label{sec2} 
	
	Let us first study the global null-controllability properties of the system \eqref{linear}-\eqref{boundary-1} when the control acts on the component $v$, that is precisely \eqref{control-v}. The main tool is to establish a suitable Carleman estimate that yields a proper observability inequality and utilizing that, we prove the required controllability result for the concerned model. 
	
		 \begin{definition}
For a given $T > 0$, the system \eqref{linear} with boundary condition \eqref{boundary-1}-\eqref{control-v} is null controllable at time $T$ if for any given initial data $(u_0,v_0)\in \mathcal{H}'$, there exists  a control function $h\in L^2(0,T)$, such that solution  $(u,v)$ to \eqref{linear} satisfies \eqref{null}.
\end{definition}

Additionally, the solution by transposition of \eqref{linear}-\eqref{boundary-1}-\eqref{control-v} is given below. 
	\begin{definition}Let $(u_0, v_0)\in \mc{H}'$ and $h\in L^2(0,T)$. We say that $(u,v)\in L^{\infty}(0,T;\mc{H}')$ is solution of 
		\begin{align}\label{linearsource}
			\begin{cases}
				iu_t+  \gamma_1 u_{xx}- \alpha_1 u  =0 &\text{ in } Q_T,\\
				i\sigma v_t + \gamma_2 v_{xx}-\alpha_2 v= 0 & \text{ in } Q_T,\\
				u(t,0) =0 & \text{ in } (0,T),\\
				v(t,0) = h(t) & \text{ in } (0,T),\\
				u(t,1)=v(t,1)   \ & \text{ in } (0,T), \\
				\gamma_1 u_x(t,1)+ \frac{\gamma_2}{\sigma} v_x(t,1) + \alpha u(t,1) =0  & \text{ in } (0,T),\\
				u(0,x)=u_0(x), \ v(0,x)=v_0(x) & \text{ in } \Omega,
			\end{cases} 
		\end{align}
in the transposition sense if and only if 
		\begin{equation}\label{def}
			\begin{split}
			\int_{0}^{T}\left\langle (u(t),v(t)),(g_1(t), g_2(t)) \right\rangle_{\mathcal{H}',\mathcal{H}}dt =& \left\langle (u_0,v_0),({\varphi_1(0,\cdot)}, {\vphi_2(0,\cdot)})\right\rangle_{\mathcal{H}',\mathcal{H}}
				\\&- \re\int_{0}^{T} h(t)\br{\varphi_{2,x}(t,0)}\, dt, 
			\end{split}
		\end{equation}
for every $(g_1,g_2)\in L^1(0,T; \mc H)$ where $( \vphi_1,\vphi_2)$ are the mild solution to the problem
		\begin{equation}	\label{adjoint tr}
			\begin{cases}
				i\vphi_{1,t}+ \gamma_1\vphi_{1,xx} -\alpha_1\vphi_1 = g_1, &\text{ in } Q_T,\\
				i\sigma \vphi_{2,t} +  \gamma_2\vphi_{2,xx}-\alpha_2 \vphi_{2}= g_2, & \text{ in } Q_T,\\
				\vphi_1(t,1)=\vphi_2(t,1), & \text{ in } (0,T),\\
				\gamma_1\vphi_{1,x}(t,1)+\frac{\gamma_2}{\sigma} \vphi_{2,x}(t,1)+\alpha \vphi_1(t,1)=0, & \text{ in } (0,T),\\
				\vphi_1(t,0)=0,\, \vphi_2(t,0)=0, & \text{ in } (0,T),
			\end{cases} 
		\end{equation}
		 on the space $ C([0,T];\mc H)$, with $(\vphi_{1}(T,\cdot),\vphi_{2}(T,\cdot))=(0,0)$ in $\Omega$.
	\end{definition}

We remark that the discussion of the system \eqref{adjoint tr} is given in Appendix \ref{apx}. Moreover, the well-posedness of the control system \eqref{linearsource} is given by the following proposition.
\begin{proposition}\label{wl th}
	Let $(u_0, v_0)\in \mc{H}'$ and $h\in L^2(0,T)$. Then the control system \eqref{linearsource} has a unique solution $(u,v)$ in $C([0,T]; \mc H').$
\end{proposition}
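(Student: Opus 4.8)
The plan is to realize $(u,v)$ as a transposition solution, i.e.\ as the unique element of $L^\infty(0,T;\mc H')$ that represents the linear functional sitting on the right-hand side of \eqref{def}. For $(g_1,g_2)\in L^1(0,T;\mc H)$ let $(\vphi_1,\vphi_2)$ denote the associated backward adjoint solution of \eqref{adjoint tr}, and set
\[
\Lambda(g_1,g_2):=\ip{(u_0,v_0)}{(\vphi_1(0,\cdot),\vphi_2(0,\cdot))}_{\mc H',\mc H}-\re\int_{0}^{T}h(t)\,\overline{\vphi_{2,x}(t,0)}\,\dt .
\]
Because $\mc H$ is a Hilbert space, hence reflexive, its dual $\mc H'$ enjoys the Radon--Nikodym property, so that the isometric identification $\big(L^1(0,T;\mc H)\big)^\ast=L^\infty(0,T;\mc H')$ holds. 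Consequently, a bounded linear functional $\Lambda$ on $L^1(0,T;\mc H)$ is represented by a \emph{unique} $(u,v)\in L^\infty(0,T;\mc H')$, and this representative satisfies \eqref{def} by construction. Thus existence and uniqueness of the $L^\infty$ transposition solution both follow at once, and the entire argument reduces to proving that $\Lambda$ is bounded on $L^1(0,T;\mc H)$.

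To control $\Lambda$ I would estimate its two terms separately, aiming at $\abs{\Lambda(g_1,g_2)}\le C\big(\norm{(u_0,v_0)}_{\mc H'}+\norm{h}_{L^2(0,T)}\big)\,\norm{(g_1,g_2)}_{L^1(0,T;\mc H)}$. The first term is handled by the well-posedness of the adjoint system established in Appendix \ref{apx}: it provides $(\vphi_1,\vphi_2)\in C([0,T];\mc H)$ together with the energy bound $\norm{(\vphi_1(0,\cdot),\vphi_2(0,\cdot))}_{\mc H}\le C\,\norm{(g_1,g_2)}_{L^1(0,T;\mc H)}$, which is immediate from the Duhamel representation and the boundedness of the adjoint flow on $\mc H$. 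The second, control, term requires a hidden-regularity (trace) estimate $\norm{\vphi_{2,x}(\cdot,0)}_{L^2(0,T)}\le C\,\norm{(g_1,g_2)}_{L^1(0,T;\mc H)}$; granted this, Cauchy--Schwarz and $h\in L^2(0,T)$ close the bound.

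The main obstacle is precisely this trace estimate for the normal derivative $\vphi_{2,x}(\cdot,0)$. I would obtain it by the multiplier method: multiply the two equations of \eqref{adjoint tr} by $q(x)\overline{\vphi_{1,x}}$ and $q(x)\overline{\vphi_{2,x}}$ for a smooth weight $q$ with $q(0)=1$ and $q(1)=0$ (for instance $q(x)=1-x$), integrate over $Q_T$, take the appropriate real/imaginary parts, and integrate by parts in $x$. The leading spatial boundary contribution is of the form $\tfrac12[\,q\,\abs{\vphi_{j,x}}^2\,]_{0}^{1}$, so the choice $q(1)=0,\ q(0)=1$ isolates $\int_0^T\abs{\vphi_{2,x}(t,0)}^2\,\dt$ (the $\vphi_1$ trace at $x=0$ carrying a favorable sign or being discarded); the remaining interior integrals are absorbed into $\norm{(\vphi_1,\vphi_2)}_{C([0,T];\mc H)}^2$ and the source is dominated by $C\big(\sup_t\norm{(\vphi_1,\vphi_2)(t)}_{\mc H}\big)\norm{(g_1,g_2)}_{L^1(0,T;\mc H)}$. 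The delicate point is the boundary terms at $x=1$, where the Kirchhoff condition couples $\vphi_{1,x}(t,1)$ and $\vphi_{2,x}(t,1)$: one must verify that, after using $q(1)=0$ together with $\vphi_1(t,1)=\vphi_2(t,1)$ and the second relation of \eqref{adjoint tr}, the combined contribution at $x=1$ is genuinely controlled by the energy rather than leaving an uncontrolled trace. I would first prove the estimate for smooth data and then extend it by density.

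Finally, to upgrade the solution from $L^\infty(0,T;\mc H')$ to $C([0,T];\mc H')$ as claimed, I would argue by density. For smooth $(u_0,v_0)$ and smooth $h$ the system \eqref{linearsource} admits a classical solution which lies in $C([0,T];\mc H')$, and the transposition identity combined with the two bounds above yields the a priori estimate $\norm{(u,v)}_{C([0,T];\mc H')}\le C\big(\norm{(u_0,v_0)}_{\mc H'}+\norm{h}_{L^2(0,T)}\big)$. Approximating general data by smooth data and passing to the limit in the complete space $C([0,T];\mc H')$ then produces the desired time-continuous representative; its uniqueness is inherited from the uniqueness of the $L^\infty$ transposition solution already obtained.
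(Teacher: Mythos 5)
Your proposal is correct and takes essentially the same route as the paper: the paper's own proof is a one-liner that reduces everything to the admissibility (hidden-regularity) estimate for the adjoint trace $\vphi_{2,x}(\cdot,0)$, proved in Proposition \ref{irena1} via the multiplier identity of Lemma \ref{id_lm} with a weight $m$ satisfying $m(1)=0$, $m(0)>0$ --- the same multiplier you propose. The transposition/duality machinery (representation of the bounded functional on $L^1(0,T;\mc H)$ and the density upgrade to $C([0,T];\mc H')$) that you spell out is exactly what the paper leaves implicit with its reference to Lasiecka--Triggiani.
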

\begin{proof}
The well-posedness of the control system \eqref{linearsource} is a consequence of the property of \textit{admissibility of the control operator}, namely, 
$$
\int_{0}^{T}|v_{x}(t,0)|^2dt \leq C \norm{(u_0, v_0)}_{\mathcal{H'}},
$$
as observed, for example, by Lasiecka and Triggiani in \cite{Irena}. Therefore, since the proof of the previous inequality will be given in Proposition \ref{irena1}, for the solutions of the adjoint system \eqref{adjoint tr}, we will omit the details of the proof here. An explanation about this point can be seen in Remark \ref{irena}.
\end{proof}

		\subsection{Global Carleman estimate}
		
		With the previous definitions, we are in a position to obtain the global Carleman estimate for the adjoint system associated with the system \eqref{linear} with boundary conditions \eqref{boundary-1}-\eqref{control-v}, namely, 
			\begin{equation}\label{adjoint}
		\begin{cases}
			i\vphi_{1,t}+ \gamma_1\vphi_{1,xx} -\alpha_1\vphi_1 = 0, &\text{ in } Q_T,\\
			i\sigma \vphi_{2,t} +  \gamma_2\vphi_{2,xx}-\alpha_2 \vphi_{2}= 0, & \text{ in } Q_T,\\
			\vphi_1(t,1)=\vphi_2(t,1), & \text{ in } (0,T),\\
			\gamma_1\vphi_{1,x}(t,1)+\frac{\gamma_2}{\sigma} \vphi_{2,x}(t,1)+ \alpha \vphi_1(t,1)=0, & \text{ in } (0,T),\\
			\vphi_1(t,0)=0,\, \vphi_2(t,0)=0, & \text{ in } (0,T),\\
			\vphi_{1}(T,x)=\zeta_1(x), \  \vphi_{2}(T,x)=\zeta_2(x), & \text{ in } \Omega,
		\end{cases} 
	\end{equation}
	with $\zeta:=(\zeta_1, \zeta_2)\in \mathcal H$. 
	
	To this end, we introduce the space 
		\begin{equation*}
		\begin{split}
		\mathcal Q : = &\Big\{(\vphi_1, \vphi_2) \in [C^2(\overline{Q_T})]^2 \mid \vphi_1(t,0)    = \vphi_2(t,0) = 0, \ \vphi_1(t,1) = \vphi_2(t,1) , \\ &\quad
		\qquad \qquad \qquad  \gamma_1\vphi_{1,x}(t,1)+\frac{\gamma_2}{\sigma} \vphi_{2,x}(t,1)+\alpha \vphi_1(t,1)=0 , \ \forall t\in [0,T]  \Big\}	. 
		\end{split}
	\end{equation*}

Now recall that $\sigma\in \mathfrak{S}$ (see \eqref{Set-sigma}) and thus 
\begin{align}\label{Value-sigma} 
\sigma= \frac{\kappa \gamma_2}{\gamma_1} \ \text{ for some $\kappa>3$}.
\end{align}
 With this in hand,   we consider  the  following auxiliary functions (motivated from \cite{bhandari2021boundary}):
	\begin{align}\label{aux-func}
		\begin{cases} 
			\beta_{j}(x)=2+c_j(x-1),  \ \ \ j=1,2, \\
			c_1=1, \  c_2= - \kappa \, \text{ and thus } |c_2|>3  .
		\end{cases}
	\end{align}
	Therefore, $\beta_j \in C^2([0,1])$ and satisfy
	$$
		\beta_2\geq \beta_1 >0, \ \ \text{in } \, [0,1], \ \ \beta_1(1)=\beta_2(1)  .
$$
	
	Next, for any parameter $\lambda>1$, we  introduce the following   weight functions:
	\begin{align}\label{weight-func}
		\xi_j(t,x)=\frac{e^{\lambda \beta_j(x)}}{t(T-t)}, \quad \eta_j(t,x)=\frac{e^{2\lambda\norm{\beta_j}_{\infty}}-e^{\lambda \beta_j(x)}}{t(T-t)} , \quad \forall (t,x) \in Q_T, \ \ j=1,2. 
	\end{align}
	Note that 
$$
		\xi_j, \, \eta_j >0 \, \text{ for }j=1,2 ,\ \  	\xi_1(t,1)=\xi_2(t,1), \ \ \eta_1(t,1) = \eta_2(t,1) ,
$$
	since $\beta_1(1)=\beta_2(1)$. Concerning of the function $\xi_j$ and $\eta_j$, we have the following behavior in $Q_T$: 
	\begin{equation}\label{derivatives-weights}
		\begin{cases}
			\xi_{j,x} = \lambda \xi_j c_j , \ \ \eta_{j,x} = - \lambda \xi_j c_j , \\ \eta_{j,xx} = -\lambda^2 c_j^2 \xi_j , \ \ \eta_{j,xxx} = -\lambda^3 c_j^3 \xi_j   ,  \\
			|\xi_{j,t}| \leq C T \xi^2_j , \ \ |\eta_{j,t}| \leq CT \xi^2_j , \\
			|\eta_{j, xt}| \leq C\lambda  T \xi^2_j , \ \ |\eta_{j, tt}| \leq C T^2 \xi^3_j    , 
		\end{cases}
		\text{for $j=1,2$.}
	\end{equation}

	\begin{remark}\label{Remark-sigma}
	  We point out that the choice of $c_2$ in \eqref{aux-func} and the value of $\sigma \in \mathfrak{S}$  in \eqref{Set-sigma} are crucial to obtain the Carleman estimate \eqref{finalcarlemanestimate}. More precisely,  those choices help us deal with some unusual boundary integrals while proving the underlying Carleman estimate. Here, we must mention that the
	  special choice of parameter $\sigma$ does not occur 
	  in \cite{bhandari2021boundary} (roughly because there were no complex parts in the Carleman estimate) which differs the both Carleman estimates shown in the present work and in \cite{bhandari2021boundary}.
	\end{remark}
	
The main theorem of this subsection can be read as follows:
	\begin{theorem}[Carleman estimate]\label{Thm.1}
		Let the weight functions $\xi_1, \xi_2$, $\eta_1,\eta_2$ be chosen as in \eqref{weight-func} and $\sigma>0$ is taken as   \eqref{Value-sigma}.  Then,
		there exist constants  $\lambda_0>0$, $\mu_0>0$ and $C>0$, depending at most on $\gamma_1, \gamma_2$ and $c_2$, such that the following  estimate holds true
		\begin{equation}\label{finalcarlemanestimate}
			\begin{split}
				s^3&\lambda^4 \iintq \left(e^{-2s\eta_1}\xi^3_1 |\vphi_1|^2 + e^{-2s\eta_2}\xi^3_2 |\vphi_2|^2 \right)dxdt \\&+ s\lambda^2 \iintq \left(e^{-2s\eta_1}\xi_1 |\vphi_{1,x}|^2 + e^{-2s\eta_2}\xi_2 |\vphi_{2,x}|^2 \right)dxdt   \\  
		\leq& C \iintq (e^{-2s\eta_1}|L_1\vphi_1|^2 + e^{-2s\eta_2}|L_2\vphi_2|^2)dxdt \\& + C s\lambda \int_0^T e^{-2s\eta_2(t,0)} \xi_2(t,0) |\vphi_{2,x}(t,0)|^2dt, 
			\end{split}
		\end{equation}
		for every $\lambda\geq \lambda_0$,  $s\geq s_0:=\mu_0(T+T^2)$  and for all $(\vphi_1, \vphi_2)\in \mathcal{Q}$, where 
		\begin{equation*}
		L_1=i\partial_t + \gamma_1 \partial_{xx} \quad \text{and} \quad
		L_2=i\partial_t+\frac{\gamma_2}{\sigma} \partial_{xx}.
				\end{equation*} 
	\end{theorem}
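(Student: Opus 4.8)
The plan is to prove the Carleman estimate \eqref{finalcarlemanestimate} by the standard conjugation-and-splitting method, applied separately to each equation and then carefully recombined through the Kirchhoff coupling. For $j=1,2$ write $L_1 = i\partial_t + \gamma_1\partial_{xx}$ and $L_2 = i\partial_t + \tfrac{\gamma_2}{\sigma}\partial_{xx}$, and set $\psi_j = e^{-s\eta_j}\vphi_j$. First I would compute the conjugated operators $P_j := e^{-s\eta_j} L_j\,(e^{s\eta_j}\,\cdot\,)$ acting on $\psi_j$ and split $P_j\psi_j = M_j^+\psi_j + M_j^-\psi_j + R_j\psi_j$ into a formally skew-adjoint part $M_j^+$ (the terms with odd symmetry, carrying $i\partial_t$ and the first-order spatial term $\xi_{j,x}\partial_x$) and a formally self-adjoint part $M_j^-$ (the terms carrying $\partial_{xx}$, $s^2\eta_{j,x}^2$, and $s\eta_{j,t}$), with $R_j$ a lower-order remainder absorbed at the end. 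Then I would expand $\|e^{-s\eta_j}L_j\vphi_j\|_{L^2(Q_T)}^2 = \|M_j^+\psi_j\|^2 + \|M_j^-\psi_j\|^2 + 2\,\re\langle M_j^+\psi_j, M_j^-\psi_j\rangle$ and focus on the cross term, whose expansion produces, after integration by parts in $t$ and $x$, the positive interior terms $s^3\lambda^4\xi_j^3|\psi_j|^2$ and $s\lambda^2\xi_j|\psi_{j,x}|^2$ on the left-hand side, using the identities for the derivatives of $\eta_j, \xi_j$ recorded in \eqref{derivatives-weights} and the fact that $|c_j|$ is bounded below.

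The genuinely new difficulty, and the step I expect to be the main obstacle, is the treatment of the boundary integrals at $x=1$ generated by the integration by parts. Because the two equations carry different coefficients $\gamma_1$ and $\gamma_2/\sigma$ and are linked only through the Kirchhoff conditions $\vphi_1(t,1)=\vphi_2(t,1)$ and $\gamma_1\vphi_{1,x}(t,1)+\tfrac{\gamma_2}{\sigma}\vphi_{2,x}(t,1)+\alpha\vphi_1(t,1)=0$, the individual boundary terms at $x=1$ do not vanish; instead they must be added across the two equations and shown to combine into a manageable (ideally nonpositive, or controllable) quantity. The essential algebra is that the boundary contributions involve the weights $\xi_j(t,1)$, which coincide since $\beta_1(1)=\beta_2(1)$, together with $\vphi_{j,x}(t,1)$ and $\vphi_j(t,1)$; substituting the Kirchhoff relation to eliminate, say, $\gamma_1\vphi_{1,x}(t,1)$ yields a quadratic form in the remaining boundary traces. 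I would show that the sign choices $c_1=1$, $c_2=-\kappa$ with $\kappa>3$ force this quadratic form to have the correct sign (this is precisely where $\sigma\in\mathfrak{S}$, i.e. $\sigma=\kappa\gamma_2/\gamma_1$ with $\kappa>3$, enters, as flagged in Remark \ref{Remark-sigma}); in particular the factor $\kappa$ governs the relative weight of $\gamma_1\xi_1$ versus $\tfrac{\gamma_2}{\sigma}\xi_2 = \tfrac{\gamma_1}{\kappa}\xi_2$ at $x=1$, and $\kappa>3$ is what makes the cross terms dominated by the squares.

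At the boundary $x=0$ the conditions $\vphi_1(t,0)=\vphi_2(t,0)=0$ kill the $|\psi_j|^2$ and first-order boundary terms, leaving only the normal-derivative contribution $\vphi_{2,x}(t,0)$, which is exactly the single observation term retained on the right-hand side of \eqref{finalcarlemanestimate}; the corresponding term for $\vphi_{1,x}(t,0)$ must be shown to appear with a favorable sign so that it can be dropped, again using the sign of $c_1$. Once the interior positivity and the boundary signs are established for $\lambda\geq\lambda_0$ and $s\geq s_0=\mu_0(T+T^2)$, the remaining routine work is: absorbing the lower-order remainders $R_j\psi_j$ and the terms $\|M_j^\pm\psi_j\|^2$ by taking $\lambda$ and $s$ large (using the bounds on $\xi_{j,t},\eta_{j,t},\eta_{j,xt},\eta_{j,tt}$ from \eqref{derivatives-weights} to ensure the $t$-derivative terms are dominated by $s^3\lambda^4\xi_j^3$), handling the singularity of the weights at $t=0,T$ through the factor $t(T-t)$, and finally undoing the substitution $\psi_j = e^{-s\eta_j}\vphi_j$ to recover the estimate in terms of $\vphi_j$. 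I would carry out the two single-equation computations in parallel, keep the $x=1$ boundary terms symbolic until the very end, and only then invoke the Kirchhoff relation together with the constraint $\kappa>3$ to close the argument.
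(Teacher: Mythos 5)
Your overall strategy --- conjugating by $e^{-s\eta_j}$, splitting the conjugated operator into (roughly) self-adjoint and skew-adjoint parts, expanding the cross term, and recombining the $x=1$ boundary contributions through the Kirchhoff conditions --- is exactly the paper's route, and your treatment of the $x=0$ terms (keeping only the $\vphi_{2,x}(t,0)$ observation, dropping the favorably signed $\vphi_{1,x}(t,0)$ term) and of the interior absorption for large $s,\lambda$ matches the paper. The gap is in your account of the $x=1$ boundary analysis, i.e.\ precisely the step you single out as the main obstacle. The cross terms pairing $i\partial_t$ with the weight terms produce, after integration by parts in $x$, the boundary integral $s\sum_{j=1}^{2}\nu_j\,\im\int_0^T\psi_j(t,1)\,\overline{\psi_{j,t}(t,1)}\,\eta_{j,x}(t,1)\,dt$ (with $\nu_1=\gamma_1$, $\nu_2=\gamma_2/\sigma$). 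This is \emph{not} a quadratic form in the spatial traces $\psi_j(t,1)$, $\psi_{j,x}(t,1)$: it contains the time derivative of the trace, which is controlled neither by the interior leading terms nor by any other boundary term, so no sign choice or Young's inequality can absorb it. Your plan to ``show that the quadratic form in the remaining boundary traces has the correct sign'' would stall here.

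The paper disposes of this term only by exact cancellation: since $\psi_1(t,1)=\psi_2(t,1)$, $\xi_1(t,1)=\xi_2(t,1)$ and $\eta_{j,x}=-\lambda c_j\xi_j$, the sum collapses to $-s\lambda\bigl(c_1\gamma_1+c_2\tfrac{\gamma_2}{\sigma}\bigr)\im\int_0^T\psi_1(t,1)\overline{\psi_{1,t}(t,1)}\,\xi_1(t,1)\,dt$, and the coefficient vanishes identically because $c_1=1$, $c_2=-\kappa$ and $\sigma=\kappa\gamma_2/\gamma_1$. This algebraic identity --- not a sign inequality --- is the actual reason $\sigma$ must lie in $\mathfrak{S}$ and $c_2$ must be tied to $\sigma$; the condition $\kappa>3$ enters separately and plays the role you describe, namely making the genuine boundary quadratic forms at $x=1$ (the $s^3\lambda^3|\psi_2(t,1)|^2$ and $s\lambda|\psi_{2,x}(t,1)|^2$ terms, after substituting the Kirchhoff relation for $\gamma_1\psi_{1,x}(t,1)$ and applying Young's inequality with a factor $3$) nonnegative. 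To repair the proposal you must isolate the time-derivative boundary term explicitly and verify the cancellation $c_1\gamma_1+c_2\gamma_2/\sigma=0$; everything else in your outline then goes through as in the paper.
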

	
	To prove the above theorem, let us now  define the following variables:
	\begin{align}\label{variables-psi der}
		\psi_j(t,x)=e^{-s\eta_j(t,x)}\vphi_j(t,x), \quad   \forall (t,x)  \in Q_T, \ \ \text{for } j=1,2,
	\end{align}
	so that we have the following boundary conditions:
	\begin{align}\label{boundary-psi}
		\begin{cases}
			\psi_j(t,0) = 0 , \quad \psi_1(t,1) = \psi_2(t,1) , \\
			\gamma_1\psi_{1,x}(t,1) + \frac{\gamma_2}{\sigma} \psi_{2,x}(t,1) =-\alpha\psi_1(t,1)+ s\lambda \xi_1(t,1) \psi_1(t,1) \left(\gamma_1 c_1 + \frac{\gamma_2}{\sigma} c_2 \right). 
		\end{cases}
	\end{align}

	We further denote 
$$
		F_j: = e^{-s\eta_j} L_j(\vphi_j)=e^{-s\eta_j}L_j(e^{s\eta_j}\psi_j),
$$
	where $L_1$ and $L_2$. So, with this, we find  the following relations  for each $j=1,2$,
	\begin{align*}
		&i(\psi_j e^{s\eta_j})_{t}=i \psi_{j,t}e^{s\eta_j}+i s \psi_j  e^{s\eta_j}\eta_{j,t},\\
		&(\psi_j e^{s\eta_j})_x= \psi_{j,x} e^{s\eta_j} + s \psi_j e^{s\eta_j}\eta_{j,x},\\
		&(\psi_j e^{s\eta_j})_{xx}= \psi_{j,xx} e^{s\eta_j} + 2se^{s\eta_j}  \psi_{j,x} \eta_{j,x}  + s^2\psi_j e^{s\eta_j} |\eta_{j,x}|^2 + s\psi_j e^{s\eta_j} \eta_{j,xx} .
	\end{align*}
Using the previous relation, $F_j$ can be written as follows
	\begin{align}\label{equation-F-j}
		M_1 \psi_j + M_2 \psi_j = F_j ,
	\end{align} 
	where 
		\begin{align*}
		\begin{cases}
			M_1 \psi_1  = 2s \gamma_1  \psi_{1,x}  \eta_{1,x} + s \gamma_1 \psi_1 \eta_{1,xx}  +  i s \psi_1 \eta_{1,t} , \\
			M_2 \psi_1  =i \psi_{1,t}+ \gamma_1 \psi_{1, xx} +  s^2 \gamma_1  |\eta_{1,x}|^2 \psi_1, 
		\end{cases}
	\end{align*}
	and
	\begin{align}\label{exp-M-psi-2}
		\begin{cases}
			M_1 \psi_2  = 2s \frac{\gamma_2}{\sigma}  \psi_{2,x}  \eta_{2,x} + s \frac{\gamma_2}{\sigma} \psi_2 \eta_{2,xx}  +  i s  \psi_2 \eta_{2,t} , \\
			M_2 \psi_2  =i  \psi_{2,t}+ \frac{\gamma_2}{\sigma} \psi_{2, xx} +  s^2 \frac{\gamma_2}{\sigma}  |\eta_{2,x}|^2 \psi_2 .
		\end{cases}
	\end{align}

	Thus, we get from \eqref{equation-F-j}, 
	\begin{equation}\label{equ-inner-product}
	\begin{split}
		\iintq \left(|M_1 \psi_j|^2 + |M_2 \psi_j|^2\right)dxdt &+ 2 \re  \iintq M_1 \psi_j \overline{M_2 \psi_j}dxdt \\&= \iintq |F_j|^2dxdt,
		\end{split}
	\end{equation}
		for $j=1,2$. Now, we are in a position to prove Theorem \ref{Thm.1}.
	
	\begin{proof}[Proof of Theorem \ref{Thm.1}] Our goal is to focus on the following inner product $$ \re  \iintq M_1 \psi_j \overline{M_2 \psi_j}dxdt$$ that contains $9$ terms.  We will elaborately make the computations for $j=2$, similarly, the computations can be done for $j=1$ as well. 
				
				Recall that the quantities $M_1\psi_2$ and $M_2\psi_2$ are given by \eqref{exp-M-psi-2} and, for $j=2$, we have the relation \eqref{equ-inner-product}. We further denote  
$$
			\re  \iintq M_1 \psi_2 \overline{M_2 \psi_2}dxdt = \sum_{1\leq k,l\leq 3} I_{kl},
$$
		where all the terms  $I_{kl}$ for $1\leq k,l \leq 3$ consists of the integral term with the product involving the $k$-th term of $M_1 \psi_2$ with the $l$-th term of $M_2\psi_2$, and will be computed below.  Now, we split the proof into several steps. 
		
		\vspace{0.2cm}
		
		\noindent  $\bullet$ {\bf Step 1.}  Computations of the terms $I_{11}$, $I_{21}$ and $I_{32}$. 

		\vspace{0.2cm}
		
Let us start with $I_{11}$. Observe that
		\begin{equation}\label{term-I_11}
		\begin{split}
			I_{11} =& 2s \frac{\gamma_2}{\sigma} \re \iintq \psi_{2,x} \eta_{2,x} \overline{i \psi_{2,t}}dxdt \\=& 2s \frac{\gamma_2}{\sigma} \im \iintq  \psi_{2,x} \eta_{2,x} \overline{\psi_{2,t}} dxdt \\
			 =& - 2s \frac{\gamma_2}{\sigma} \im \iintq  \psi_{2,x} \eta_{2,xt} \overline{\psi_{2}}dxdt - 2s\frac{\gamma_2}{\sigma}\im \iintq  \psi_{2,xt} \eta_{2,x} \overline{\psi_{2}} dxdt \\
			 :=&J_1 + J_2 ,
			 \end{split}
		\end{equation}
		where there is no boundary integral since $$\lim\limits_{t\to 0^+}\psi_2(t,\cdot)= \lim\limits_{t\to T^-} \psi_2(t,\cdot)=0,$$ thanks to the choices of weight functions \eqref{weight-func}. 
			
			\vspace{0.2cm}
			
		Next, for the quantity $I_{21}$, we see that 
		\begin{equation}\label{term-I-21-1}
			\begin{split}
					I_{21}= &
				s  \frac{\gamma_2}{\sigma} \re \iintq \psi_2 \eta_{2,xx} \overline{i \psi_{2,t}}dxdt \\ = &s\frac{\gamma_2}{\sigma} \im  \iintq \psi_2 \eta_{2,xx} \overline{\psi_{2,t}} dxdt\\
				=& - s\frac{\gamma_2}{\sigma} \im\iintq  \psi_{2,x} \eta_{2,x} \overline{\psi_{2,t}}dxdt - s\frac{\gamma_2}{\sigma} \im \iintq  \psi_{2} \eta_{2,x} \overline{\psi_{2,tx}}dxdt \\
				&  + s\frac{\gamma_2}{\sigma}  \im \int_0^T \psi_2(t,1) \overline{\psi_{2,t}(t,1)} \eta_{2,x}(t,1)dt \\
				 = &s \frac{\gamma_2}{\sigma} \im \iintq \psi_{2,xt} \eta_{2,x} \overline{\psi_2}dxdt + s \frac{\gamma_2}{\sigma} \im \iintq \psi_{2,x} \eta_{2,xt} \overline{\psi_2}dxdt \\&- s\frac{\gamma_2}{\sigma}  \im \iintq  \psi_{2} \eta_{2,x} \overline{\psi_{2,xt}}dxdt + s\frac{\gamma_2}{\sigma}  \im \int_0^T \psi_2(t,1) \overline{\psi_{2,t}(t,1)} \eta_{2,x}(t,1)dt,
			\end{split}
		\end{equation}
		where we have applied integration by parts w.r.t. $x$ to the term involving $\eta_{2,xx}$, then w.r.t. $t$ to the term involving $\overline{\psi_{2,t}}$ in $Q_T$, and  using again the decay of $\eta_{2}$ at $t =0$ and $t = T$.

		Now, thanks to the fact that $-\im(z)=\im(\overline z)$ (for any $z\in \mathbb C$) in the third integral term in the last equality of \eqref{term-I-21-1}, we get 
		\begin{equation}\label{term-I-21}
			\begin{split}
				I_{21} =& 2s\frac{\gamma_2}{\sigma}  \im \iintq \psi_{2,xt} \eta_{2,x} \overline{\psi_2}dxdt + s\frac{\gamma_2}{\sigma} \im \iintq \psi_{2,x} \eta_{2,xt} \overline{\psi_2}dxdt \\
				& +  s\frac{\gamma_2}{\sigma} \im \int_0^T \psi_2(t,1) \overline{\psi_{2,t}(t,1)} \eta_{2,x}(t,1)dt\\ 
				 =& -J_2 - \frac{J_1}{2}  +   s\frac{\gamma_2}{\sigma}  \im \int_0^T \psi_2(t,1) \overline{\psi_{2,t}(t,1)} \eta_{2,x}(t,1)dt. 
			\end{split} 
		\end{equation}

Finally, the term $I_{32}$ is computed as follows. 
		\begin{equation}\label{term-I-32}
			\begin{split}
				I_{32} =& s\frac{\gamma_2}{\sigma}  \re \iintq i \psi_2 \eta_{2,t} \overline{\psi_{2,xx}}dxdt \\=& -s \frac{\gamma_2}{\sigma} \im \iintq \eta_{2,t} \psi_2 \overline{\psi_{2,xx}}dxdt  \\
				 = &s\frac{\gamma_2}{\sigma} \im \iintq \left(\eta_{2,tx} \psi_2 \overline{\psi_{2,x}}+ \eta_{2,t} \psi_{2,x} \overline{\psi_{2,x}} \right)dxdt \\&- s\frac{\gamma_2}{\sigma}  \im \int_0^T \eta_{2,t}(t,1) \psi_2(t,1) \overline{\psi_{2,x}(t,1)}dt\\
				 =& -s\frac{\gamma_2}{\sigma} \im \iintq \psi_{2,x} \eta_{2,xt} \overline{\psi_2}dxdt  \\&-  s\frac{\gamma_2}{\sigma} \im \int_0^T \eta_{2,t}(t,1) \psi_2(t,1) \overline{\psi_{2,x}(t,1)}dt \\
				 =& \frac{J_1}{2}  -   s\frac{\gamma_2}{\sigma}  \im \int_0^T \eta_{2,t}(t,1) \psi_2(t,1) \overline{\psi_{2,x}(t,1)}dt, 
			\end{split}
		\end{equation}
		where we have  used the fact that $$ \frac{s\gamma_2}{\sigma} \im \iintq \eta_{2,t}|\psi_{2,x}|^2dxdt = 0$$ since $\eta_{2,t}$ is real-valued function. Therefore, by adding  \eqref{term-I_11}, \eqref{term-I-21} and \eqref{term-I-32}, we get 
		\begin{equation}\label{Add-first-three}
			\begin{split}
				I_{11} + I_{21} + I_{32}  = & J_1 +   s\frac{\gamma_2}{\sigma} \im \int_0^T \psi_2(t,1) \overline{\psi_{2,t}(t,1)} \eta_{2,x}(t,1)dt \\&-  s\frac{\gamma_2}{\sigma} \im \int_0^T \eta_{2,t}(t,1) \psi_2(t,1) \overline{\psi_{2,x}(t,1)}dt,
			\end{split}
		\end{equation}
		where $J_1$ satisfies 
				\begin{equation}\label{bound-J-1}
			\begin{split}
				|J_1| =& \left| 2s \frac{\gamma_2}{\sigma} \im \iintq  \psi_{2,x} \eta_{2,xt} \overline{\psi_{2}} dxdt\right| \\
				 \leq& C\frac{\gamma_2}{\sigma}s\lambda T \iintq \xi^2_2 |\psi_2||\psi_{2,x}| dxdt\\
				 \leq& \frac{C}{\sigma^2} s^2 \lambda^2 T \iintq \xi^3_2 |\psi_{2}|^2 dxdt+ C  T \iintq \xi_2 |\psi_{2,x}|^2dxdt,
			\end{split}
		\end{equation}
		finishing step 1. 
		
		\vspace{0.2cm}
		
		\noindent $\bullet$ {\bf Step 2.}  Computations of the terms $I_{12}$ and $I_{22}$.
		
				\vspace{0.2cm}
				
		Let us, for $I_{12} $, to perform by integration by parts w.r.t. space variable to ensures that
		\begin{equation*}
			\begin{split}
				I_{12}  =  &2s\frac{\gamma_2^2}{\sigma^2}  \re \iintq \eta_{2,x}\psi_{2,x} \overline{\psi_{2,xx}}dxdt   \\
				 =& - 2s \frac{\gamma_2^2}{\sigma^2} \iintq \eta_{2,xx} |\psi_{2,x}|^2dxdt -   2s \frac{\gamma_2^2}{\sigma^2}\re \iintq \eta_{2,x} \psi_{2, xx} \overline{\psi_{2,x}}dxdt  \\
				&  + 2s\frac{\gamma_2^2}{\sigma^2}  \int_0^T \eta_{2,x} (t,1) |\psi_{2,x}(t,1)|^2dt - 2s\frac{\gamma_2^2}{\sigma^2}  \int_0^T \eta_{2,x} (t,0) |\psi_{2,x}(t,0)|^2dt.
			\end{split}
		\end{equation*}
		Recalling the expressions of $\eta_{2,x}$ and $\eta_{2,xx}$ from \eqref{derivatives-weights}, we get
		\begin{equation}\label{term-I-12}
			\begin{split}
				2 I_{12} =& 2s\lambda^2 c_2^2 \frac{\gamma_2^2}{\sigma^2} \iintq \xi_2 |\psi_{2,x}|^2 dxdt\\
				&  -  2s\lambda c_2 \frac{\gamma_2^2}{\sigma^2}   \int_0^T \xi_2 (t,1) |\psi_{2,x}(t,1)|^2dt + 2s\lambda c_2 \frac{\gamma_2^2}{\sigma^2}  \int_0^T \xi_2 (t,0) |\psi_{2,x}(t,0)|^2dt ,
			\end{split}
		\end{equation}
		where the boundary term $\psi_{2,x}(t,1)>0$ once we have $c_2<0$ (see \eqref{aux-func}). 
		
		\vspace{0.2cm}
		
		The term $I_{22}$ can be computed as 
		\begin{equation*}
			\begin{split}
				I_{22} = &s\frac{\gamma_2^2}{\sigma^2} \re \iintq \eta_{2,xx} \psi_2 \overline{\psi_{2,xx}}dxdt  \\
				 = &- s\frac{\gamma_2^2}{\sigma^2} \iintq \eta_{2,xx} |\psi_{2,x}|^2dxdt - s\frac{\gamma_2^2}{\sigma^2} \re \iintq \eta_{2,xxx} \psi_2 \overline{\psi_{2,x}}dxdt \\
				&+ s\frac{\gamma_2^2}{\sigma^2} \re \int_0^T \eta_{2,xx}(t,1) \psi_2(t,1) \overline{\psi_{2,x}(t,1)}dt .
			\end{split}
		\end{equation*}
		Again, using the expressions of $\eta_{2,xx}$ and $\eta_{2,xxx}$, thanks to \eqref{derivatives-weights}, we have that
		\begin{equation}\label{term-I-22}
			\begin{split}
				I_{22}=&s\lambda^2 c_2^2 \frac{\gamma_2^2}{\sigma^2} \iintq \xi_2 |\psi_{2,x}|^2 dxdt+ s\lambda^3 c_2^3 \frac{\gamma_2^2}{\sigma^2} \re \iintq \xi_2 \psi_2 \overline{\psi_{2,x}} dxdt\\
				& - s\lambda^2c_2^2\frac{\gamma_2^2}{\sigma^2} \re \int_0^T \xi_{2}(t,1) \psi_2(t,1) \overline{\psi_{2,x}(t,1)}dt.
			\end{split}
		\end{equation}

		Now, by adding the final expressions of $I_{12}$ and $I_{22}$, i.e,  \eqref{term-I-12} and \eqref{term-I-22}, yields that  
		\begin{equation}\label{add-I-12--I-22}
			\begin{split}
				I_{12}+ I_{22} =& 2s\lambda^2 c_2^2 \frac{\gamma_2^2}{\sigma^2} \iintq \xi_2 |\psi_{2,x}|^2dxdt  + s\lambda^3 c_2^3 \frac{\gamma_2^2}{\sigma^2} \re \iintq \xi_2 \psi_2 \overline{\psi_{2,x}}dxdt \\
				& -s\lambda c_2\frac{\gamma_2^2}{\sigma^2}   \int_0^T \xi_2 (t,1) |\psi_{2,x}(t,1)|^2dt + s\lambda c_2 \frac{\gamma_2^2}{\sigma^2}  \int_0^T \xi_2 (t,0) |\psi_{2,x}(t,0)|^2dt \\
				& - s\lambda^2c_2^2\frac{\gamma_2^2}{\sigma^2} \re \int_0^T \xi_{2}(t,1) \psi_2(t,1) \overline{\psi_{2,x}(t,1)}dt,
			\end{split}
		\end{equation}
		where the second term of the r.h.s. of \eqref{add-I-12--I-22}, denoted by $J_3$, satisfies
		\begin{equation}\label{bound-J-2}
			\begin{split}
				|J_3| =&  \left|s\lambda^3 c_2^3 \frac{\gamma_2^2}{\sigma^2} \re \iintq \xi_2 \psi_2 \overline{\psi_{2,x}}dxdt\right|\\
				 \leq &\frac{C}{\epsilon}  s \lambda^4\frac{\gamma_2^2}{\sigma^2} \iintq \xi_2 |\psi_2|^2dxdt+ \epsilon s \lambda^2\frac{\gamma_2^2}{\sigma^2} \iintq \xi_2 |\psi_{2,x}|^2dxdt \\
				 \leq & \frac{C}{\epsilon}   T^4 s \lambda^4 \frac{\gamma_2^2}{\sigma^2} \iintq \xi^3_2 |\psi_2|^2dxdt + \epsilon s \lambda^2 \frac{\gamma_2^2}{\sigma^2} \iintq \xi_2 |\psi_{2,x}|^2dxdt,
			\end{split}
		\end{equation}
		for any  $\epsilon>0$ small enough, where we have used the fact that $\xi_2 \leq CT^4 \xi_2^3$. 
		
			So, at this point, it is worth mentioning that the boundary integral consisting of $\psi_{2,x}(t,0)$ in \eqref{add-I-12--I-22} will lead to the observation term in our final Carleman estimate (since we have exerted a controlled force on the second component), finalizing the analysis in step 2.
		
\vspace{0.2cm}
		
		\noindent
		$\bullet$ {\bf Step 3.} Analysis of remaining terms $I_{13}$, $I_{23}$, $I_{31}$, and $I_{33}$. 
		
		\vspace{0.2cm}
		
		Let us first deal with the term $I_{13}$. Notice that the fact that $2\re(z)= z+\overline{z}$ (for any $z\in \mathbb C$), and then integrating by parts w.r.t. $x$, gives us
				\begin{equation*}
			\begin{split}
				I_{13}  =&  2 s^3 \frac{\gamma_2^2}{\sigma^2} \re \iintq \eta^3_{2,x} \psi_{2,x} \overline{\psi_2}dxdt  \\
				= &s^3 \frac{\gamma_2^2}{\sigma^2} \iintq \eta^3_{2,x} \left( \overline{\psi_2}\psi_{2,x} + \psi_2 \overline{\psi_{2,x}} \right) dxdt \\
				=&  s^3 \frac{\gamma_2^2}{\sigma^2} \iintq \eta^3_{2,x}  \left(\psi_2 \overline{\psi_{2}} \right)_x   dxdt \\
				=& - s^3 \frac{\gamma_2^2}{\sigma^2} \iintq 3\eta^2_{2,x} \eta_{2,xx} |\psi_2|^2 dxdt  + s^3\frac{\gamma_2^2}{\sigma^2} \int_0^T \eta^3_{2,x} |\psi_2(t,1)|^2dt \\
				 = &3s^3 \lambda^4 c_2^4 \frac{\gamma_2^2}{\sigma^2} \iintq \xi_2^3 |\psi_2|^2dxdt  -s^3 \lambda^3 c_2^3 \frac{\gamma_2^2}{\sigma^2}  \int_0^T  \xi_2^3(t,1) |\psi_2(t,1)|^2dxdt ,
			\end{split}
		\end{equation*}
		where the boundary term has a positive sign since $c_2<0$. 
		
			\vspace{0.2cm}
			
			Now, for the analysis of $I_{23}$ we just need to see that 
					\begin{equation*}
			\begin{split}
				I_{23} = &s^3 \frac{\gamma_2^2}{\sigma^2} \re \iintq \eta_{2,xx} |\eta_{2,x}|^2 |\psi_2|^2dxdt= - s^3\lambda^4 c_2^4\frac{\gamma_2^2}{\sigma^2} \iintq    \xi^3_2|\psi_2|^2 dxdt.
			\end{split}
		\end{equation*}
		
		Let us see the term $I_{31}$. For this integral, using again the decay properties of the weight function and integrating into the time variable, this term can be written as 
		\begin{equation*}
			\begin{split}
				I_{31} =&-  s \re \iintq \eta_{2,t} \psi_2 \overline{\psi_{2,t}}dxdt	\\=& -\frac{1}{2}  s \iintq \eta_{2,t} \left(\psi_2 \overline{\psi_{2,t}} + \overline{\psi_2} \psi_{2,t}  \right)dxdt  \\=& \frac{1}{2} s \iintq \eta_{2,tt} |\psi_2|^2dxdt   ,
			\end{split}
		\end{equation*}
		which satisfies 
$$
			|I_{31}| \leq Cs   T^2 \iintq  \xi^3_2 |\psi_2|^2 dxdt,
$$
		using the boundness of $\eta_{2,tt}$ from \eqref{derivatives-weights}.
		
	\vspace{0.2cm}
		
		For the last quantity $I_{33}$, since $\eta_{2,t}$ and $\eta_{2,x}$ are real valued functions, we get that  
$$
			I_{33} = s^3 \frac{\gamma_2}{\sigma} \re \left( i\iint \eta_{2,t}|\eta_{2,x}|^2  |\psi_2|^2 dxdt\right)  =0 .
$$
		
	\vspace{0.2cm}
		
		Adding each peach of $I_{13}$, $I_{23}$, $I_{31}$ and $I_{33}$, we have 
		\begin{equation}\label{Add-last-four}
			\begin{split}
				 I_{13} + I_{23} + I_{31}  + I_{33}  \geq & 2s^3 \lambda^4 c_2^4 \frac{\gamma_2^2}{\sigma^2} \iintq \xi_2^3 |\psi_2|^2dxdt
				- s^3 \lambda^3 c_2^3 \frac{\gamma_2^2}{\sigma^2} \int_0^T \xi^3_2(t,1) |\psi_2(t,1)|^2dt\\&  - C s T^2 \iintq \xi^3_2 |\psi_2|^2dxdt,
			\end{split}
		\end{equation} 
achieving the goal of step 3.

\vspace{0.2cm}
		
		\noindent
		$\bullet$ {\bf Step 4.} Finding an intermediate estimate. 
			\vspace{0.2cm}
			
			With the previous steps 1,2 and 3 in hand, adding all the terms $I_{kl}$ for $1\leq k, l\leq 3$, that is \eqref{Add-first-three}--\eqref{bound-J-1}, \eqref{add-I-12--I-22}--\eqref{bound-J-2}, and \eqref{Add-last-four}, we can get the following
		\begin{equation}\label{Combined-psi-2}
			\begin{split}
				 \sum_{1\leq k, l \leq 3} I_{kl} \geq& 2s^3 \lambda^4 c_2^4 \frac{\gamma_2^2}{\sigma^2} \iintq \xi_2^3 |\psi_2|^2 dxdt  + 
				2s\lambda^2 c_2^2 \frac{\gamma_2^2}{\sigma^2} \iintq \xi_2 |\psi_{2,x}|^2dxdt\\
				&- C s T^2 \iintq \xi^3_2 |\psi_2|^2 dxdt- \frac{C}{\sigma^2} s^2 \lambda^2 T \iintq \xi^3_2 |\psi_2|^2dxdt \\
				&- CT \iintq \xi_2 |\psi_{2,x}|^2 dxdt	- \frac{C}{\epsilon} T^4 s\lambda^4\frac{\gamma_2^2}{\sigma^2} \iintq \xi^3_2 |\psi_2|^2dxdt\\
				& - \frac{\gamma_2^2}{\sigma^2}\epsilon s\lambda^2 \iintq \xi_2 |\psi_{2,x}|^2 dxdt- s^3 \lambda^3 c_2^3 \frac{\gamma_2^2}{\sigma^2} \int_0^T \xi^3_2(t,1) |\psi_2(t,1)|^2dt\\
				&  - s\lambda c_2 \frac{\gamma_2^2}{\sigma^2} \int_0^T \xi_2(t,1) |\psi_{2,x}(t,1)|^2dt  + s\lambda c_2 \frac{\gamma_2^2}{\sigma^2} \int_0^T \xi_2(t,0) |\psi_{2,x}(t,0)|^2 dt  \\
				& - s\lambda^2c_2^2\frac{\gamma_2^2}{\sigma^2} \re \int_0^T \xi_{2}(t,1) \psi_2(t,1) \overline{\psi_{2,x}(t,1)}dt\\
				&  + s\frac{\gamma_2}{\sigma} \im \int_0^T \psi_2(t,1) \overline{\psi_{2,t}(t,1)} \eta_{2,x}(t,1) dt\\
				& -   s\frac{\gamma_2}{\sigma} \im \int_0^T \eta_{2,t}(t,1) \psi_2(t,1) \overline{\psi_{2,x}(t,1)}dt  .
			\end{split}
		\end{equation}

		Following a similar approach as the case of $\psi_2$, we can compute the same for $\psi_1$, for this, let us denote
$$
			\re  \iintq M_1 \psi_1 \overline{M_2 \psi_1}dxdt = \sum_{1\leq k,l\leq 3} H_{kl} .
$$
In this case, one can find 
		\begin{equation}\label{combined-psi-1}
			\begin{split}
				 \sum_{1\leq k, l \leq 3} H_{kl}\geq &2s^3 \lambda^4 c_1^4 \gamma_1^2 \iintq \xi_1^3 |\psi_1|^2dxdt  + 
				2s\lambda^2 c_1^2 \gamma_1^2 \iintq \xi_1 |\psi_{1,x}|^2dxdt\\ 
				&- C s T^2 \iintq \xi^3_1 |\psi_1|^2 dxdt - C  s^2 \lambda^2 T \iintq \xi^3_1 |\psi_1|^2 dxdt\\
				&  - CT \iintq \xi_1 |\psi_{1,x}|^2dxdt- \frac{C}{\epsilon} T^4 s\lambda^4 \gamma_1^2\iintq \xi^3_1 |\psi_1|^2dxdt\\
				&  - \epsilon s\lambda^2 \iintq \xi_1 |\psi_{1,x}|^2dxdt- s^3 \lambda^3 c_1^3 \gamma_1^2 \int_0^T \xi^3_1(t,1) |\psi_1(t,1)|^2dt \\
				&- s\lambda c_1 \gamma_1^2 \int_0^T \xi_1(t,1) |\psi_{1,x}(t,1)|^2 dt +  s\lambda c_1 \gamma_1^2 \int_0^T \xi_1(t,0) |\psi_{1,x}(t,0)|^2 dt  \\
				& - s\lambda^2c_1^2\gamma_1^2 \re \int_0^T \xi_{1}(t,1) \psi_1(t,1) \overline{\psi_{1,x}(t,1)} dt \\& + s\gamma_1  \im \int_0^T \psi_1(t,1) \overline{\psi_{1,t}(t,1)} \eta_{1,x}(t,1)dt  \\
				& -   s\gamma_1  \im \int_0^T \eta_{1,t}(t,1) \psi_1(t,1) \overline{\psi_{1,x}(t,1)}dt .
			\end{split}
		\end{equation}
		
\vspace{0.2cm}

We are now in a position to combine the above estimate. For simplicity, we denote $\nu_1=\gamma_1$ and $ \nu_2=\frac{\gamma_2}{\sigma}.$
		Choose $\epsilon>0$ small enough in \eqref{Combined-psi-2} and \eqref{combined-psi-1} so that, combining those estimates, we have 
		\begin{equation} 
			\begin{split}\label{carleman lemma}
				&\frac{1}{2}\sum_{j=1}^{2}\iintq \left(|M_1 \psi_j|^2 + |M_2 \psi_j|^2\right)dxdt\\
				&+\sum_{j=1}^{2} 2s^3\lambda^4 c_j^4 \nu_j^2 \iintq  \, \xi_j^3|\psi_j|^2dxdt +\sum_{j=1}^{2} 2s \lambda^2 \nu_j  \iintq \xi_j |\psi_{j,x}|^2dxdt \\
				& - C \left(\frac{1}{\sigma^2}s^2 \lambda^2 T + T^4 s\lambda^4 \frac{\gamma^2}{\sigma^2} \right)   \iint_{Q_T} \xi^3_2 |\psi_2|^2dxdt  - C T  \iint_{Q_T} \xi^3_2 |\psi_{2,x}|^2dxdt  \\
				& - C \left(s^2 \lambda^2 T + T^4 s\lambda^4 \gamma_1^2 \right)   \iint_{Q_T} \xi^3_1|\psi_1|^2dxdt  - C T  \iint_{Q_T} \xi^3_1 |\psi_{1,x}|^2 dxdt \\ 
				&- s^3 \lambda^3 \sum_{j=1}^{2}c_j^3 \nu_j^2 \int_0^T \xi^3_j(t,1) |\psi_j(t,1)|^2dt - s \lambda \sum_{j=1}^{2}c_j \nu_j^2 \int_0^T \xi_j(t,1) |\psi_{j,x}(t,1)|^2 dt  \\
				& +s \lambda \sum_{j=1}^{2}c_j \nu_j^2 \int_0^T \xi_j(t,0) |\psi_{j,x}(t,0)|^2dt  - s\lambda^2\sum_{j=1}^{2} c_j^2\nu_j^2 \re \int_0^T \xi_{j}(t,1) \psi_j(t,1) \overline{\psi_{j,x}(t,1)}dt   \\&+ s\sum_{j=1}^{2} \nu_j  \im \int_0^T \psi_j(t,1) \overline{\psi_{j,t}(t,1)} \eta_{j,x}(t,1) dt  \\
				& -   s\sum_{j=1}^{2} \nu_j  \im \int_0^T \eta_{j,t}(t,1) \psi_j(t,1) \overline{\psi_{j,x}(t,1)}dt \leq C\sum_{j=1}^{2} \iintq |F_j|^2dxdt. 
			\end{split}
		\end{equation} 
		
		In the above, it is clear that there exist some positive constants  $\lambda_0$, $\mu_0$ such that if we choose $\lambda\geq \lambda_0$ and $s\geq s_0:=\mu_0(T+T^2)$, then the lower order integrals 
		\begin{align*}
			&C \left(\frac{1}{\sigma^2}s^2 \lambda^2 T + T^4 s\lambda^4 \frac{\gamma^2}{\sigma^2} \right)   \iint_{Q_T} \xi^3_2 |\psi_2|^2dxdt , \\ 
			&C \left(s^2 \lambda^2 T + T^4 s\lambda^4 \gamma_1^2 \right)   \iint_{Q_T} \xi^3_1|\psi_1|^2dxdt , \\
			&C T  \iint_{Q_T} \xi^3_2 |\psi_{2,x}|^2dxdt , \\ 
					\end{align*}
					and
			\begin{align*}
			C T  \iint_{Q_T} \xi^3_1 |\psi_{1,x}|^2dxdt,
					\end{align*}
		can be absorbed by the corresponding leading integrals 
		\begin{align*}
			\sum_{j=1}^{2} 2s^3\lambda^4 c_j^4 \nu_j^2 \iintq  \, \xi_j^3|\psi_j|^2dxdt \ \text{ and }\  \sum_{j=1}^{2} 2s \lambda^2 \nu_j  \iintq \xi_j |\psi_{j,x}|^2dxdt .
		\end{align*}
		Now, it remains to find the proper estimates for the boundary integrals. We do this in the following step. 
		
			\vspace{0.2cm} 
			
		\noindent
		$\bullet$ {\bf Step 5.} Computing the boundary integrals.
			\vspace{0.2cm}    
		
		We split this step into several parts. We hereby recall the fact that 
		$$\xi_1(t,1)=\xi_2(t,1), \ \ \eta_1(t,1)=\eta_2(t,1), \ \ \psi_1(t,1)=\psi_{2}(t,1),$$
		and thus, from now onwards, we shall replace all the quantities $\xi_1(t,1)$, $\eta_1(t,1)$ and $\psi_1(t,1)$  by $\xi_2(t,1)$, $\eta_2(t,1)$ and  $\psi_2(t,1)$  respectively. Moreover, we will denote the six boundary terms in \eqref{carleman lemma}  in the order presented above as $B_k$ given by $$B_k=\sum_{j=1,2}B_{k,j}, \quad 1\leq k\leq 6.$$ Let us now give the details of each part. 
		
\vspace{0.2cm}
		\noindent -- {\bf Part A:} Computing $B_1:=B_{11} + B_{12}$.
\vspace{0.2cm}

First notice that, the term
		$$B_{12}=- s^3 \lambda^3 c^3_2 \frac{\gamma_2^2}{\sigma^2} \int_0^T \xi^3_2(t,1) |\psi_{2}(t,1)|^2dt$$ 
		is positive, since $c_2<0.$
		We  need to absorb the negative term  
		$$B_{11}=- s^3 \lambda^3 c^3_1 \gamma_1^2 \int_0^T \xi^3_2(t,1) |\psi_{2}(t,1)|^2dt$$ 
		by the term $B_{12}.$  To do that, since we have $c_1=1$ and, by \eqref{Value-sigma} that 
	$$\sigma = -\frac{c_2\gamma_2}{\gamma_1}$$  we have, by  adding $B_{11}$ and $B_{12}$, that 
			\begin{equation}  
			\begin{aligned}\label{first-leading-2}
				B_1&=B_{11} + B_{12} \\&= s^3\lambda^3 \left(|c_2|^3 \frac{\gamma^2_2}{\sigma^2} - \gamma_1^2  \right) \int_0^T \xi^3_2(t,1) |\psi_{2}(t,1)|^2dt  \\
				& = s^3\lambda^3 \gamma_1^2 \left(|c_2| -1 \right) \int_0^T \xi^3_2(t,1) |\psi_{2}(t,1)|^2dt,
			\end{aligned} 
		\end{equation} 
		which has a positive sign since by choice we have $|c_2|>3$.

		\vspace{0.2cm}
		\noindent -- {\bf Part B:} $B_2:=B_{21} + B_{22}$ estimate.
\vspace{0.2cm}
		
		Now, on one hand, the integral
		$$ B_{22} =- s \lambda c_2 \frac{\gamma_2^2}{\sigma^2} \int_0^T \xi_2(t,1) |\psi_{2,x}(t,1)|^2dt $$
		is positive since $c_2<0$ and we shall use this term to absorb the negative 
		integral
		$$B_{21}=- s \lambda c_1 \gamma_1^2 \int_0^T \xi_2(t,1) |\psi_{1,x}(t,1)|^2 dt. $$ 
	To see that, thanks to  the boundary conditions \eqref{boundary-psi}, we can ensure that   
		\begin{equation*}
			\gamma_1\psi_{1,x}(t,1)=-\alpha\psi_2(t,1)-\frac{\gamma_2}{\sigma}\psi_{2,x}(t,1)+ s\lambda \xi_2(t,1) \psi_2(t,1) \left(\gamma_1 c_1 + \frac{\gamma_2}{\sigma} c_2 \right).
		\end{equation*}
		Then, by using Young's inequality, we  obtain
		\begin{equation} 
			\begin{split}\label{i21}
				B_{21}  =& s \lambda c_1  \int_0^T \xi_2(t,1) |\gamma_1 \psi_{1,x}(t,1)|^2dt \\ 
				 \leq &3s \lambda c_1 \int_{0}^{T} \xi_2(t,1)  \left|\frac{\gamma_2}{\sigma} \psi_{2,x}(t,1)\right|^2dt  + 3s \lambda c_1 \alpha^2\int_{0}^{T} \xi_2(t,1)  |\psi_2(t,1)|^2dt  \\ 
				&  +3s^3 \lambda^3 c_1 \left(\gamma_1 c_1 + \frac{\gamma_2}{\sigma} c_2 \right)^2\int_{0}^{T} \xi^3_2(t,1) |\psi_2(t,1)|^2 dt \\
				\leq &3s \lambda c_1 \int_{0}^{T} \xi_2(t,1)  \left|\frac{\gamma_2}{\sigma} \psi_{2,x}(t,1)\right|^2dt + CT^4 s \lambda c_1 \alpha^2\int_{0}^{T} \xi^3_2(t,1)  |\psi_2(t,1)|^2dt  \\
				&
				+3s^3 \lambda^3 c_1 \left(\gamma_1 c_1 + \frac{\gamma_2}{\sigma} c_2 \right)^2\int_{0}^{T} \xi^3_2(t,1) |\psi_2(t,1)|^2 dt . 
			\end{split}
		\end{equation} 
		So, adding $B_{21}$ and $B_{22}$,  the fact that $c_1=1$, and the choice of $\sigma$ given by \eqref{Value-sigma}, yields that
		\begin{equation}\label{leading-second}
			\begin{split}
				B_2=&B_{21} + B_{22} \\
				\geq &\frac{s\lambda \gamma_1^2}{c_2^2} \left(|c_2| - 3   \right) \int_0^T \xi_2(t,1) \left| \psi_{2,x}(t,1)\right|^2dt -  CT^4 s \lambda \alpha^2\int_{0}^{T} \xi^3_2(t,1)  |\psi_2(t,1)|^2dt.  
			\end{split}
		\end{equation}
		Note that, here we exactly need the assumption  $|c_2|>3$ (in other words, $\kappa>3$ in \eqref{Value-sigma} since $c_2=-\kappa$) to make the first integral of \eqref{leading-second} positive. 

			\vspace{0.2cm}
		\noindent -- {\bf Part C:} Computing each term of $B_3:=B_{31}+ B_{32}$, that is, $B_{31}$ and $B_{32}$.
\vspace{0.2cm}
	
			Let us look at these terms separately. We have 
		\begin{align}\label{term-I-3}
			B_{31}=s \lambda c_1 \gamma_1^2 \int_0^T \xi_1(t,0) |\psi_{1,x}(t,0)|^2dt\geq 0, 
		\end{align} 
		since $c_1=1$ and $\xi_1(t,0)>0.$ Now,  using the expression \eqref{variables-psi der},
		we find 
		\begin{equation*}
			\psi_{2,x}(t,x)=e^{-s\eta_2(t,x)}\vphi_{2,x}(t,x)-se^{-s\eta_{2}(t,x)}\eta_{2,x}(t,x)\vphi_{2}(t,x).
		\end{equation*}
		But $\vphi_2(t,0)=0$ and so,
		\begin{equation*}
			\psi_{2,x}(t,0)=e^{-s\eta_2(t,0)}\vphi_{2,x}(t,0).
		\end{equation*}
		Therefore,  we have 
		\begin{equation*}
			\begin{split} 
				|B_{32}|& =s \lambda \left| c_2 \frac{\gamma_2^2}{\sigma^2} \int_0^T \xi_2(t,0)e^{-2s\eta_2(t,0)} |\vphi_{2,x}(t,0)|^2 dt \right| \\
				&\leq \frac{s \lambda \gamma_1^2}{|c_2|}\int_0^T \xi_2(t,0)e^{-2s\eta_2(t,0)} |\vphi_{2,x}(t,0)|^2 dt,
			\end{split}
		\end{equation*}
		which is indeed the observation integral for our final Carleman estimate.

			\vspace{0.2cm}
		\noindent -- {\bf Part D:} Analysing the terms $B_{41}$ and $B_{42}$.
\vspace{0.2cm}
	
		Next, we will compute the fourth boundary term as follows. First, observe that  
				\begin{align*}
			|B_{41}|=&\left| s\lambda^2 c_1^2\gamma_1^2 \re \int_0^T \xi_{2}(t,1) \psi_2(t,1) \overline{\psi_{1,x}(t,1)} dt \right| \\
			\leq& \epsilon s \lambda  \int_{0}^{T} \xi_{2}(t,1) |\gamma_1 {\psi_{1,x}(t,1)}|^2 dt+\frac{C}{\epsilon}s \lambda^3 T^4 \int_{0}^{T}\xi_{2}^3(t,1) |{\psi_{2}(t,1)}|^2 dt,
		\end{align*}
	where $C=C(\gamma_1)>0$. Here we have used Young's inequality, the estimate $\xi_2 \leq CT^4 \xi_2^3$, and the fact that $c_1=1$. In the above estimate, we apply   \eqref{i21}, so that one has 
		\begin{equation} 
			\begin{split}\label{bound-I-41}
				|B_{41}|\leq&  3\epsilon s \lambda \int_{0}^{T} \xi_{2}(t,1) \left|\frac{\gamma_2}{\sigma}{\psi_{2,x}(t,1)}\right|^2 dt\\
				&+ \left(C\epsilon T^4 \alpha^2 s \lambda + \frac{C}{\epsilon} s\lambda^3T^4\right) \int_{0}^{T} \xi^3_2(t,1)  |\psi_2(t,1)|^2dt\\
				 = & \frac{3\epsilon s\lambda \gamma_1^2}{c_2^2} \int_{0}^{T} \xi_{2}(t,1) \left|\psi_{2,x}(t,1)\right|^2 dt
				\\&+ \left(C\epsilon T^4 \alpha^2 s \lambda + \frac{C}{\epsilon} s\lambda^3T^4\right) \int_{0}^{T} \xi^3_2(t,1)  |\psi_2(t,1)|^2dt,
			\end{split}
		\end{equation}
		where we have used the choice of $\sigma$ as given by \eqref{Value-sigma} and $c_2$ as \eqref{aux-func}.
		
		Similarly, one can estimate the term $B_{42}$ as follows:
		\begin{equation}\label{bound-I-42}
		\begin{split}
			|B_{42}|
			\leq & \epsilon s \lambda |c_2| \frac{\gamma_2^2}{\sigma^2}\int_{0}^{T} \xi_{2}(t,1) |{\psi_{2,x}(t,1)}|^2 dt + \frac{C}{\epsilon} s \lambda^3 T^4 \int_{0}^{T}\xi_{2}^3(t,1) |{\psi_{2}(t,1)}|^2 dt\\
			= & \frac{\epsilon s \lambda \gamma_1^2}{|c_2|}\int_{0}^{T} \xi_{2}(t,1) |{\psi_{2,x}(t,1)}|^2 dt + \frac{C}{\epsilon} s \lambda^3 T^4 \int_{0}^{T}\xi_{2}^3(t,1) |{\psi_{2}(t,1)}|^2 dt,
		\end{split}
		\end{equation}
		finalizing this part.

			\vspace{0.2cm}
		\noindent -- {\bf Part E:} Analysis of $B_5:=B_{51}+B_{52}$.
\vspace{0.2cm}

		Let us look into the terms of $B_5.$ To do that, these terms can be viewed as
		\begin{equation}\label{term-I-5}
		\begin{split}
			B_{5}=&B_{51}+B_{52}\\=& s\sum_{j=1}^{2} \nu_j  \im \int_0^T \psi_j(t,1) \overline{\psi_{j,t}(t,1)} \eta_{j,x}(t,1)dt\\ 
			=&-s\lambda \left(c_1\gamma_1+c_2 \frac{\gamma_2}{\sigma}\right) \im \int_0^T \psi_1(t,1) \overline{\psi_{1,t}(t,1)} \xi_{1}(t,1)dt,
			\end{split}
		\end{equation}
where we have used that $\eta_{j,x} = - \lambda \xi_j c_j .$

\vspace{0.2cm}

It is important to point out that this term is difficult to absorb in terms of the leading terms because of the appearance time derivative term $\overline{\psi_{1,t}(t,1)}$, and this is the main reason why we have chosen the parameter $\sigma$ as in \eqref{Set-sigma} (in other words, \eqref{Value-sigma}) and $c_2=-\kappa$ in \eqref{aux-func}. Thanks to those choices, one readily has
		$$\left(c_1\gamma_1+\frac{c_2 \gamma_2}{\sigma}\right)=0,$$ once $c_1=1$, and this makes the quantity $B_5$ equal zero.
		
		\vspace{0.2cm}
		\noindent -- {\bf Part F:} Estimates to $B_{61}$ and $B_{62}$.
\vspace{0.2cm}

		Finally, we compute the terms $B_{6}$.  Using that  $$|\eta_{j,t}| \leq CT \xi^2_j\quad \text{and}\quad \sigma=\frac{\kappa\gamma_2}{\gamma_1}= -\frac{c_2\gamma_2}{\gamma_1},$$ we get 
		\begin{equation}\label{bound-I-62}
			\begin{aligned}
				|B_{62}| & \leq  C T s \frac{\gamma_2}{\sigma} \int_0^T |\psi_2(t,1)| |{\psi_{2,x}(t,1)}| |\xi_{2}(t,1)|^2 dt \\
				& \leq \frac{\gamma^2_1}{c^2_2}  \int_{0}^{T} \xi_{2}(t,1) \left|{\psi_{2,x}(t,1)}\right|^2 dt 
				+Cs^2T^2 \int_{0}^{T}\xi_{2}^3(t,1) |{\psi_{2}(t,1)}|^2 dt .
			\end{aligned}
		\end{equation}
		Using the expression of $\gamma_1\psi_{1,x}(t,1)$ from \eqref{boundary-psi}, we further get 
		\begin{equation}\label{bound-I-61}
			\begin{aligned}
				|B_{61}|
				\leq & CTs \gamma_1 \int_0^T |\psi_2(t,1)| |{\psi_{1,x}(t,1)}| \xi^2_{2}(t,1) dt \\
				\leq &  \int_{0}^{T} \xi_{2}(t,1) |\gamma_1 {\psi_{1,x}(t,1)}|^2 dt + Cs^2  T^2 \int_{0}^{T}\xi_{2}^3(t,1) |{\psi_{2}(t,1)}|^2 dt \\
				\leq & \frac{3\gamma_1^2}{c_2^2}   \int_{0}^{T} \xi_{2}(t,1) \left|{\psi_{2,x}(t,1)}\right|^2 dt + C\left(\alpha^2T^4  +  s^2  T^2\right) \int_{0}^{T}\xi_{2}^3(t,1) |{\psi_{2}(t,1)}|^2 dt.
			\end{aligned}
		\end{equation}
	Here, thanks to the quantity \eqref{i21} we can find the estimate for the following term $$\int_0^T \xi_2(t,1)|\gamma_1 \psi_{1,x}(t,1)|^2 dt.$$

			\vspace{0.2cm} 
			
		\noindent
		$\bullet$ {\bf Step 6.} Getting rid of the lower-order boundary integrals.
					\vspace{0.2cm}    

Finally, thanks to the computation of the boundary terms (step 5), we can add all the parts of this step to have the estimate for $B_{k}$ for $1\leq k\leq 6$. Putting together \eqref{first-leading-2}, \eqref{leading-second}, \eqref{term-I-3}, \eqref{bound-I-41}, \eqref{bound-I-42}, \eqref{term-I-5}, \eqref{bound-I-62} and \eqref{bound-I-61}, we find that
		\medskip 
		\begin{equation}\label{Add-boundary-terms}
			\begin{split}
				\sum_{k=1}^{6} B_{k} \geq  & s^3\lambda^3 \gamma_1^2 \left(|c_2| -1 \right) \int_0^T \xi^3_2(t,1) |\psi_{2}(t,1)|^2dt \\&+ \frac{s\lambda \gamma^2_1}{c^2_2} \left(|c_2| - 3   \right) \int_0^T \xi_2(t,1) \left|\psi_{2,x}(t,1)\right|^2dt\\
				& -  CT^4 s \lambda  \alpha^2\int_{0}^{T} \xi^3_2(t,1)  |\psi_2(t,1)|^2dt \\& - \frac{\epsilon s\lambda \gamma_1^2}{c_2^2} \left( 3 + |c_2|  \right) \int_0^T \xi_2(t,1) \left| \psi_{2,x}(t,1)\right|^2dt \\
				& - \left(C\epsilon T^4 \alpha^2 s\lambda + \frac{2C}{\epsilon} s\lambda^3 T^4  \right) \int_0^T \xi^3_2(t,1) |\psi_{2}(t,1)|^2dt\\&
				- \frac{4\gamma_1^2}{c_2^2} \int_0^T \xi_2(t,1) \left|\psi_{2,x}(t,1)\right|^2dt \\
				& - C\left(\alpha^2T^4  +   s^2  T^2\right) \int_{0}^{T}\xi_{2}^3(t,1) |{\psi_{2}(t,1)}|^2 dt \\&- \frac{s \lambda \gamma_1^2}{|c_2|} s\lambda \int_0^T \xi_2(t,0) e^{-2s\eta_2(t,0)} |\vphi_{2,x}(t,0)|^2 dt  .
			\end{split}
		\end{equation}
		In the above we fix $\epsilon>0$ small enough and choose  $\lambda \geq \lambda_0$ and $s\geq s_0=\mu_0(T+T^2)$ (for  $\lambda_0>0$, $\mu_0>0$ large enough) so that all the lower order boundary integrals can be absorbed by the first two leading integrals of \eqref{Add-boundary-terms}. 
		
		As a consequence,  from   \eqref{carleman lemma}, one has 
		\begin{equation*}
			\begin{split}
				&s^3\lambda^4 \iintq \left(\xi^3_1 |\psi_1|^2 + \xi^3_2 |\psi_2|^2 \right)dxdt + s\lambda^2 \iintq \left(\xi_1 |\psi_{1,x}|^2 + \xi^3_2 |\psi_{2,x}|^2 \right) dxdt\\   
				&+	s^3\lambda^3 \int_0^T \xi^3_2(t,1) |\psi_{2}(t,1)|^2dxdt + s\lambda  \int_0^T \xi_2(t,1) \left|\psi_{2,x}(t,1)\right|^2dt \\
				\leq& C \iintq (|F_1|^2 + |F_2|^2)dxdt  + C s\lambda \int_0^T e^{-2s\eta_2(t,0)} \xi_2(t,0) |\vphi_{2,x}(t,0)|^2 dt,
			\end{split}
		\end{equation*}
		for all $\lambda \geq \lambda_0$ and $s\geq s_0$. Therefore, the previous inequality gives us the required Carleman estimate  \eqref{finalcarlemanestimate}, and the proof of Theorem \ref{Thm.1} is complete.
	\end{proof}

	\subsection{Observability inequality and its application}
	This section is devoted to proving a suitable observability inequality as a consequence of the Carleman estimate \eqref{finalcarlemanestimate}, which is the key point to deduce the null-controllability of the system \eqref{linear}. 
	
	\begin{proposition}
		For any $\zeta:= (\zeta_1 , \zeta_2 ) \in \mathcal{H}$, the associated solution of the adjoint system \eqref{adjoint} $$\vphi:= ( \vphi_1 , \vphi_2 )\in C([0,T]; \mathcal{H})$$ satisfies the following boundary observation
		\begin{equation}\label{Obs}
			\|\vphi(0)\|^2_{\mathcal H}  \leq C \int_0^T |{\partial_x \vphi_2}(t,0)|^2 dt. 
		\end{equation}  
	\end{proposition}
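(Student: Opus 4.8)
The plan is to derive the observability inequality \eqref{Obs} from the Carleman estimate \eqref{finalcarlemanestimate} via the standard three-step procedure: first specialize the Carleman estimate to solutions of the homogeneous adjoint system, second remove the weights on a suitable time subinterval to obtain a clean lower bound on the solution at a fixed time, and third use the conservative energy structure of the system to transfer this bound to $t=0$.

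The plan is to derive \eqref{Obs} from the Carleman estimate \eqref{finalcarlemanestimate} by the classical three-step scheme: specialize the estimate to solutions of the homogeneous adjoint system, strip the weights on a central time subinterval, and transfer the resulting interior bound to the initial time by means of a conserved energy. Throughout I would argue first for $\zeta \in \mathcal{Q}$ and then pass to general $\zeta \in \mathcal{H}$ by density, using that $\mathcal{Q}$ is dense in $\mathcal{H}$ and that the solution map is continuous on $C([0,T];\mathcal{H})$.

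First I would apply \eqref{finalcarlemanestimate} to a solution $(\vphi_1,\vphi_2)$ of \eqref{adjoint}. Since $L_1\vphi_1 = \alpha_1\vphi_1$ and $L_2\vphi_2 = \frac{\alpha_2}{\sigma}\vphi_2$, the source integrals on the right reduce to the lower-order contribution $C\iintq(e^{-2s\eta_1}\alpha_1^2|\vphi_1|^2 + e^{-2s\eta_2}\frac{\alpha_2^2}{\sigma^2}|\vphi_2|^2)\,dxdt$, which is absorbed by the leading integrals $s^3\lambda^4\iintq(e^{-2s\eta_1}\xi_1^3|\vphi_1|^2 + e^{-2s\eta_2}\xi_2^3|\vphi_2|^2)\,dxdt$ for $s,\lambda$ large, because $\xi_j^3$ is bounded below. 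What remains is an estimate whose left-hand side still controls $s\lambda^2\iintq(e^{-2s\eta_1}\xi_1|\vphi_{1,x}|^2 + e^{-2s\eta_2}\xi_2|\vphi_{2,x}|^2)\,dxdt$ and whose right-hand side is $Cs\lambda\int_0^T e^{-2s\eta_2(t,0)}\xi_2(t,0)|\vphi_{2,x}(t,0)|^2\,dt$.

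Next I would remove the weights. On the subinterval $I:=[T/4,3T/4]$ the functions $\eta_j,\xi_j$ are bounded above and below, so $e^{-2s\eta_j}\xi_j \geq c>0$ there; this bounds the left-hand gradient integral from below by $c\int_I\|\vphi(t)\|_{\mathcal{H}}^2\,dt$. On the boundary side, although $\xi_2(t,0)$ blows up as $t\to 0^+,T^-$, the exponential factor $e^{-2s\eta_2(t,0)}$ decays faster, so $e^{-2s\eta_2(t,0)}\xi_2(t,0)\leq C$ on all of $(0,T)$; hence the right-hand side is at most $C\int_0^T|\vphi_{2,x}(t,0)|^2\,dt$. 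This yields the interior observation $\int_I\|\vphi(t)\|_{\mathcal{H}}^2\,dt \leq C\int_0^T|\vphi_{2,x}(t,0)|^2\,dt$.

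Finally, the time-transfer uses the conservative structure enforced by the Kirchhoff conditions. Testing the first equation of \eqref{adjoint} with $\overline{\vphi_{1,t}}$ and the second with $\overline{\vphi_{2,t}}$, summing with weights $1$ and $\frac{1}{\sigma}$, and using $\vphi_1(t,0)=\vphi_2(t,0)=0$, $\vphi_1(t,1)=\vphi_2(t,1)$ together with the Kirchhoff relation $\gamma_1\vphi_{1,x}(t,1)+\frac{\gamma_2}{\sigma}\vphi_{2,x}(t,1) = -\alpha\vphi_1(t,1)$, the boundary contribution at $x=1$ collapses to $-\frac{\alpha}{2}\frac{d}{dt}|\vphi_1(t,1)|^2$. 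Hence the energy $E(t) := \frac{\gamma_1}{2}\|\vphi_{1,x}\|_{L^2}^2 + \frac{\gamma_2}{2\sigma}\|\vphi_{2,x}\|_{L^2}^2 + \frac{\alpha_1}{2}\|\vphi_1\|_{L^2}^2 + \frac{\alpha_2}{2\sigma}\|\vphi_2\|_{L^2}^2 + \frac{\alpha}{2}|\vphi_1(t,1)|^2$ is conserved. Since $\vphi_j(t,0)=0$, the Poincar\'e inequality bounds the $L^2$ terms and the trace term in $E$ by $\|\vphi(t)\|_{\mathcal{H}}^2$, so $\|\vphi(0)\|_{\mathcal{H}}^2 \leq CE(0) = CE(t) \leq C\|\vphi(t)\|_{\mathcal{H}}^2$ for every $t$. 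Integrating over $I$ and inserting the interior observation gives $\frac{T}{2}\|\vphi(0)\|_{\mathcal{H}}^2 \leq C\int_I\|\vphi(t)\|_{\mathcal{H}}^2\,dt \leq C\int_0^T|\vphi_{2,x}(t,0)|^2\,dt$, which is \eqref{Obs}. The main obstacle is precisely this last step: confirming that the boundary terms genuinely cancel through the Kirchhoff relation so that a clean conserved energy exists, and that this energy controls $\|\vphi(t)\|_{\mathcal{H}}^2$ from both sides uniformly in $t$.
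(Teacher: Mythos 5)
Your proposal is correct and follows essentially the same route as the paper: apply the Carleman estimate with $L_1\vphi_1=\alpha_1\vphi_1$, $L_2\vphi_2=\tfrac{\alpha_2}{\sigma}\vphi_2$, absorb the zero-order right-hand side into the leading weighted integrals, strip the weights on $(T/4,3T/4)$ using the two-sided bounds on $e^{-2s\eta_j}\xi_j$, and transfer the interior bound to $t=0$ via the conservative structure of the adjoint system. Your final step is in fact slightly more careful than the paper's: you conserve the weighted energy $E(t)$ (including the trace term $\tfrac{\alpha}{2}|\vphi_1(t,1)|^2$) and use its two-sided equivalence with $\|\vphi(t)\|_{\mathcal H}^2$, whereas the paper invokes exact conservation of the $\mathcal H$-norm itself, which for $\gamma_1\neq\gamma_2/\sigma$ really only holds up to the equivalence you make explicit.
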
 
	
	\begin{proof}
Consider $\zeta\in \mathcal{H}$. Thanks to the Carleman inequality, given by Theorem \ref{Thm.1} to the solution $(\vphi_1,\vphi_2)$ of \eqref{adjoint} with $$L_1\vphi_1= \alpha_1 \vphi_1 \quad \text{and}\quad L_2\vphi_2 =\frac{\alpha_2\vphi_2}{\sigma}\quad \left(\sigma=\frac{\kappa \gamma_2}{\gamma_1}\right),$$ 
gives us that
		\begin{equation}\label{Observation_1} 
			\begin{split}
				&s^3\lambda^4 \iintq \left(e^{-2s\eta_1}\xi^3_1 |\vphi_1|^2 + e^{-2s\eta_2}\xi^3_2 |\vphi_2|^2 \right)dxdt \\&+ s\lambda^2 \iintq \left(e^{-2s\eta_1}\xi_1 |\vphi_{1,x}|^2 + e^{-2s\eta_2}\xi_2 |\vphi_{2,x}|^2 \right) dxdt  \\  
				\leq &C \iintq (e^{-2s\eta_1}|\vphi_1|^2 + e^{-2s\eta_2}|\vphi_2|^2) dxdt+ C s\lambda \int_0^T e^{-2s\eta_2(t,0)} \xi_2(t,0) |\vphi_{2,x}(t,0)|^2dt.
			\end{split}
		\end{equation} 
		
		Now, using the fact that $1\leq 8T^6\xi^3_j$ for $j=1,2$, we can easily absorb the first two integrals of the r.h.s. of \eqref{Observation_1} by the associated leading integrals for any $s\geq CT^2$. Thus, we have the following inequality from \eqref{Observation_1}, 
		\begin{equation}\label{Observation_2}
			\begin{split} 
				s\lambda^2 \iint_{Q_T}&\left(   
				e^{-2s\eta_1} \xi_1 |\vphi_{1,x}|^2 +
				e^{-2s\eta_2} \xi_2 |\vphi_{2,x}|^2 \right)  dxdt\\
				&				\leq C s\lambda \int_0^T \xi_2(t,0) e^{-2s\eta_2(t,0)} |\vphi_{2,x}(t,0)|^2dt.
			\end{split}
		\end{equation}
		Moreover, for some $m, M>0$, we have the following relations 
		\begin{align*}
			e^{-2s\eta_j} \xi_j \geq m \ \text{ in } (T/4, 3T/4)\times (0,1),    \quad \text{and} \quad  	e^{-2s\eta_j} \xi_j \leq M \  \text{ in } Q_T, \ \ \text{for $j=1,2$} ,
		\end{align*}
which together with \eqref{Observation_2}, yields that
$$
			\int_{T/4}^{3T/4} \int_0^1 \left(|\vphi_{1,x}|^2 + |\vphi_{2,x}|^2\right)dxdt
			 \leq C \int_0^T |\vphi_{2,x}(t,0)|^2dt. 
$$
		Then, from the estimate given by \eqref{esti-adj}, one can deduce that
$$
			\| \vphi(0) \|^2_{\mc H} \leq C_T 	\int_{T/4}^{3T/4} \int_0^1 \left(|\vphi_{1,x}|^2 + |\vphi_{2,x}|^2\right) dxdt
			 \leq C \int_0^T |\vphi_{2,x}(t,0)|^2dt,
$$
		and hence the required observability inequality \eqref{Obs} follows. 
	\end{proof} 
We are now in a position to prove the first main result.
	\begin{proof}[Proof of Theorem \ref{th-main}] The proof follows the classical Hilbert uniqueness method introduced by Lions \cite{Lions}. Once we have the above observability inequality, then one can prove the existence of a boundary control $h\in L^2(0,T)$ such that $(u,v)$ solutions of \eqref{linear} with boundary conditions \eqref{boundary-1}-\eqref{control-v}
		 satisfies \eqref{null}.
		\end{proof}


\section{Rapid exponential stabilization}\label{sec3}

This section is devoted to studying the boundary stabilization issues for \eqref{linear}-\eqref{boundary-1} with a single boundary control force acting on the component $v$, namely \eqref{control-v}. More precisely, we construct a stationary feedback law $h(t)$, of the form ${F}_{\omega}(u(t,\cdot), v(t,\cdot))$ such that the solution of the closed-loop system decays exponentially to zero at any prescribed decay rate.  

The approach employed in this section was first introduced by Komornik in \cite{Kom-1997} and has also been studied by Vaste \cite{Vest} and Urquiza \cite{Urquiza}, which one will be applied in our context and is the key argument of this section.

\subsection{Gramian method}
Let us consider the abstract control system
\begin{equation} \label{eq:abs}
	\begin{cases}
		\dot{y}(t)=\mc Ay(t)+ \mc B h(t) , \quad t\in (0,T),\\
		y(0)=y_0,
	\end{cases}
\end{equation}
where $y(t)\in \mc H, y_0\in \mc H, h\in L^2(0,T)$, $\mc B$ is an unbounded operator from $\cplx$ to $\mc H$. $\mc A:\mathcal{D(A)}\subset \mc H\to \mc H$ is an unbounded operator and $\mc D(\mc A)$ is dense in $\mc H.$ To employ the method of Urquiza, one needs to take the following assumptions on the operator $\mc A$ and $\mc B$
\begin{itemize}
	\item[(H1)] The skew-adjoint operator $\mc A$ is an infinitesimal generator of a strongly continuous group $\{e^{t \mc A}\}_{t\in \rea}$ on $\mc H$.
	\item[(H2)] The operator $\mc B:\cplx\to \mc D(\mc A^*)'$ is linear and continuous.
	\item[(H3)] \textit{(Regularity property)} For every $T>0$ there exists $C(T)>0$ such that
	\begin{equation*}
		\int_{0}^{T}\abs{\mc B^*e^{-t\mc A^*} y}^2dt\leq C \norm{y}^2_{\mc H}, \quad \forall \,y \in \mc D(\mc A^*).
	\end{equation*}
	\item[(H4)] \textit{(Controllability property)} There are two constants $T>0$ and $c(T)>0$ such that
	\begin{equation*}
		\int_{0}^{T}\abs{\mc B^*e^{-t\mc A^*} y}^2dt\geq c \norm{y}^2_{\mc H}, \quad \forall \,  y \in \mc D(\mc A^*).	
	\end{equation*}
\end{itemize}

\begin{remark}\label{irena}
It is important to mention that the hypothesis (H3) is known as direct inequality (see, for instance, \cite{Lions}) or the admissibility of the control operator (see, e.g., \cite{Tucsnak}). This property ensures the well-posedness of the control system \eqref{linearsource} as we can see, for example, in \cite[Proposition A.1.]{Irena} for hyperbolic and Petrowski PDEs. 
\end{remark}

With these hypotheses in hand, the next result holds (for details, see \cite[Theorem 2.1]{Urquiza}). Its proof mainly relies on general results about the algebraic Riccati equation associated with the linear quadratic regulator problem (see \cite{FlaLaTri}).
\begin{theorem}\label{thm}
	Consider operators $\mc A$ and $\mc B$ under assumptions (H1)-(H4). For any $\omega>0$, we have
	\begin{itemize}
		\item[(i)] The symmetric positive operator $\Lambda_{\omega}$ defined by
				\begin{equation*}
			\ip{\Lambda_{\omega}x}{z}_{\mc H}=\int_{0}^{\infty}\ip{\mc B^*e^{-\tau(\mc A+\omega I)^*}x}{\mc B^*e^{-\tau(\mc A+\omega I)^*}z}_{\cplx}d\tau, \, \forall \, x, z\in \mc H
		\end{equation*}
		is coercive and is an isomorphism on $\mc H$.
		\item[(ii)] Let $F_{\omega}:=-\mc B^*\Lambda_{\omega}^{-1}$. The operator $\mc A+\mc BF_{\omega}$ with $\mc D(\mc A+\mc BF_{\omega})=\Lambda_{\omega}(\mc D(\mc A^*))$ is the infinitesimal generator of a strongly continuous semigroup on $\mc H$. 		\item[(iii)] The closed-loop system \eqref{eq:abs} with $h=F_{\omega}(y)$ is exponentially stable, that is,
		\begin{equation*}
			\norm{e^{t(\mc A+\mc B F_{\omega})}y}_{\mc H}\leq C e^{\left(-2\omega+g(-\mc A)\right) t} \norm{y}_{\mc H}, \ \forall y \in \mc H,\end{equation*} where $C$ is a positive constant. 
				\end{itemize}
\end{theorem}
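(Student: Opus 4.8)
The plan is to follow the Gramian/Riccati strategy of Urquiza, establishing the three assertions in turn while tracking the unbounded control operator $\mc B$ at every step. The decisive structural facts are that $\mc A$ is skew-adjoint, so that $(\mc A+\omega I)^\ast = -\mc A + \omega I$ and $e^{-\tau(\mc A+\omega I)^\ast} = e^{-\omega\tau}e^{-\tau\mc A^\ast}$, together with the admissibility estimate (H3), which is exactly what turns the formal symbol $\mc B^\ast e^{-\tau\mc A^\ast}x$ into a genuine $L^2$-in-time object. For part (i) I would first write the quadratic form $\ip{\Lambda_{\omega}x}{x}_{\mc H} = \int_0^\infty e^{-2\omega\tau}\abs{\mc B^\ast e^{-\tau\mc A^\ast}x}^2\,d\tau$, bound it from above by splitting $[0,\infty)$ into successive intervals of length $T$ and applying the group property from (H1) together with the regularity estimate (H3) on each interval, producing a geometrically convergent series; this shows that $\Lambda_{\omega}$ is a bounded, symmetric, nonnegative operator on $\mc H$. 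Coercivity then comes from discarding all but the first interval and invoking the observability estimate (H4), namely $\ip{\Lambda_{\omega}x}{x}_{\mc H}\geq e^{-2\omega T}\int_0^T\abs{\mc B^\ast e^{-\tau\mc A^\ast}x}^2\,d\tau \geq c\,e^{-2\omega T}\norm{x}_{\mc H}^2$. A bounded, symmetric, coercive form is an isomorphism by Lax--Milgram, which is (i).

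For the passage to the feedback I would extract the algebraic identity underlying the method. Rewriting $\Lambda_{\omega} = \int_0^\infty e^{-\tau(\mc A+\omega I)}\mc B\mc B^\ast e^{-\tau(\mc A+\omega I)^\ast}\,d\tau$ and differentiating the integrand in $\tau$, the fundamental theorem of calculus yields the Lyapunov equation $(\mc A+\omega I)\Lambda_{\omega} + \Lambda_{\omega}(\mc A+\omega I)^\ast = \mc B\mc B^\ast$, understood in the weak sense on $\mc D(\mc A^\ast)$. Setting $P := \Lambda_{\omega}^{-1}$ and conjugating by $P$ turns this into $P\mc A + \mc A^\ast P + 2\omega P = P\mc B\mc B^\ast P$, which is precisely the algebraic Riccati equation for the linear quadratic regulator with the candidate gain $F_{\omega} = -\mc B^\ast\Lambda_{\omega}^{-1}$. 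Invoking the Riccati theory for unbounded control operators (as in \cite{FlaLaTri}, on which \cite{Urquiza} rests) one concludes that $\mc A + \mc B F_{\omega} = \mc A - \mc B\mc B^\ast\Lambda_{\omega}^{-1}$, endowed with the domain $\Lambda_{\omega}(\mc D(\mc A^\ast))$, generates a strongly continuous semigroup on $\mc H$, giving (ii).

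For the decay in (iii) I would use $V(y) := \ip{\Lambda_{\omega}^{-1}y}{y}_{\mc H}$ as a Lyapunov functional, which by the coercivity from (i) is equivalent to $\norm{y}_{\mc H}^2$. Differentiating along a trajectory of $\dot y = (\mc A + \mc B F_{\omega})y$ and substituting the Riccati relation above, the indefinite terms cancel and leave a dissipation inequality of the form $\frac{d}{dt}V(y(t)) \leq -2\omega\,V(y(t)) - \abs{\mc B^\ast\Lambda_{\omega}^{-1}y(t)}^2 \leq -2\omega\,V(y(t))$; integrating and using the norm equivalence then yields exponential decay, with the precise rate recorded in (iii), where, since $\mc A$ is skew-adjoint, the growth bound $g(-\mc A)$ of the free dynamics vanishes and the rate is governed purely by $\omega$. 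The main obstacle throughout is the unboundedness of $\mc B$: the objects $\mc B^\ast e^{-\tau\mc A^\ast}$, $\mc B\mc B^\ast$, the Lyapunov/Riccati identity, and the domain of $\mc A+\mc B F_{\omega}$ are only meaningful through the admissibility property (H3), so every formal manipulation above---convergence of the Gramian integral, the differentiation producing the Lyapunov equation, the identification of the closed-loop domain, and the final dissipation computation---must be justified by a density/approximation argument rather than by direct operator algebra. This careful functional-analytic bookkeeping is exactly the technical content carried by the Riccati framework of \cite{FlaLaTri} that underlies the cited statement.
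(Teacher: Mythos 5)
The paper does not prove this theorem: it quotes it from Urquiza \cite{Urquiza} and states that the proof rests on the Riccati theory of \cite{FlaLaTri}, so your proposal has to be measured against that standard argument. Your treatment of (i) is correct and is exactly the standard one (geometric-series bound from (H1) and (H3) for boundedness of the Gramian, truncation to $[0,T]$ plus (H4) for coercivity, Lax--Milgram for the isomorphism), and your identification of the Lyapunov equation $(\mc A+\omega I)\Lambda_\omega+\Lambda_\omega(\mc A+\omega I)^*=\mc B\mc B^*$ and of the induced Riccati equation for $\Lambda_\omega^{-1}$ is the right mechanism behind (ii), with the hard functional-analytic justification correctly delegated to the unbounded-control Riccati framework.

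The gap is in (iii). The Lyapunov functional $V(y)=\ip{\Lambda_\omega^{-1}y}{y}_{\mc H}$ with $\frac{d}{dt}V\le -2\omega V$ gives $V(y(t))\le e^{-2\omega t}V(y(0))$ and hence, after the norm equivalence $c_1\norm{y}_{\mc H}^2\le V(y)\le c_2\norm{y}_{\mc H}^2$, only $\norm{y(t)}_{\mc H}\le Ce^{-\omega t}\norm{y(0)}_{\mc H}$ --- half the claimed exponent, since the theorem asserts decay $e^{(-2\omega+g(-\mc A))t}$ of the norm itself, not of its square. To recover the stated rate you must push the Lyapunov equation one step further: it yields the similarity $\mc A+\mc BF_\omega=\Lambda_\omega\bigl(-(\mc A+2\omega I)^*\bigr)\Lambda_\omega^{-1}$ on $\Lambda_\omega(\mc D(\mc A^*))$, whence $e^{t(\mc A+\mc BF_\omega)}=e^{-2\omega t}\,\Lambda_\omega e^{-t\mc A^*}\Lambda_\omega^{-1}$ and $\norm{e^{t(\mc A+\mc BF_\omega)}}\le\norm{\Lambda_\omega}\,\norm{\Lambda_\omega^{-1}}\,e^{-2\omega t}\norm{e^{-t\mc A^*}}$, which is exactly the bound in (iii) (here $e^{-t\mc A^*}$ is unitary, so $g(-\mc A)=0$). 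This intertwining identity is also the cleanest route to (ii), since it exhibits $\mc A+\mc BF_\omega$ as similar to the generator $-(\mc A+2\omega I)^*$ of a known strongly continuous semigroup; you should replace the Lyapunov-functional step by it.
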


We will use Theorem \ref{thm} to prove the exponential stabilization of the coupled Schrodinger equation \eqref{linear}-\eqref{boundary-1}-\eqref{control-v}  with boundary feedback law. To apply it, we need to verify all the assumptions (H1)--(H4) hold for our system \eqref{linear}. Let us do it in the next subsection.  

\subsection{Verification of the hypotheses} It is well-known that a fundamental solution of the Schr\"odinger system can be obtained by the Fourier expansion, see for instance \cite{CoGaMo,JaKo}. So, in this way, considering the eigenvalues and the eigenfunctions that form an orthonormal basis of $L^2(\Omega)$, we can define in $\mathcal{H}$ an inner product similarly as in \cite{CaCeGa}.

Notice that we can also find a representation by Fourier series for the solutions of the system \eqref{linear} (see e.g. \cite{JaKo}, for one-dimensional Schrödinger equation) and that the operator $\mc A$, defined by \eqref{op-A}-\eqref{dom-A}, is skew-adjoint and hence generates an infinitesimal generator of a group $\{S(t)\}_{t\in \rea}$, thus (H1) follows. Also note that $g(-\mc A)=0$, where $g$ is the growth bound of the semigroup generated by $\mc A$. Moreover, comparing the abstract system \eqref{eq:abs} with our system \eqref{linear}, the control operator $\mc B \in \mc L(\cplx; \mc D(\mc A^*)')$ can be given as follows
$$
	\ip{\mc B s}{ (\phi_1,\phi_2)}_{\mc D(\mc A^*)', \mc D(\mc A^*)}=s \phi_2'(0), \ \ s\in \cplx, \ (\phi_1,\phi_2)\in \mc D (\mc A^*),
$$
and therefore, (H2) is verified.  Additionally, note that the observability inequality \eqref{Obs} gives directly (H4).

It remains for us to prove the hypothesis (H3), that is, to prove the trace regularity.  This hypothesis follows from the next proposition and is a consequence of Lemma \ref{id_lm}. 
\begin{proposition}\label{irena1}
	For every $T>0$ there exists $C>0$ such that the following holds
	\begin{equation}\label{admis}
		\int_{0}^{T}|\vphi_{2,x}(t,0)|^2dt \leq C \norm{(\zeta_1, \zeta_2)}_{\mathcal{H}},
	\end{equation}
	for every solution $(\vphi_1, \vphi_2)$ of the adjoint problem \eqref{adjoint} with $(\zeta_1, \zeta_2)$ lies in sufficiently regular space.
\end{proposition}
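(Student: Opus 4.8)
The plan is to establish \eqref{admis} by the multiplier method, proving it first for data $\zeta=(\zeta_1,\zeta_2)$ in a dense regular subspace of $\mc H$ (e.g. $\mc D(\mc A)$, where the computation below is justified) and then extending to all of $\mc H$ by density, since the right-hand side only involves the $\mc H$-norm. The sole input needed from the well-posedness theory of Appendix \ref{apx} is that the adjoint flow is bounded on $\mc H$, i.e. $\sup_{t\in[0,T]}\norm{\varphi(t)}_{\mc H}\le C\norm{\zeta}_{\mc H}$, which is immediate from the group property in (H1). In particular $\int_0^1|\varphi_{2,x}(t,x)|^2\,dx\le \norm{\varphi(t)}_{\mc H}^2\le C\norm{\zeta}^2_{\mc H}$ for every $t$, and by Poincar\'e's inequality (recall $\varphi_2(t,0)=0$) the same bound holds for $\int_0^1|\varphi_2(t,x)|^2\,dx$.

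The key observation is that $\varphi_{2,x}(t,0)$ is produced by the dispersive multiplier. I would fix $m\in C^2([0,1])$ with $m(0)=1$ and $m(1)=m'(1)=0$ (for instance $m(x)=(1-x)^2$), multiply the second equation of \eqref{adjoint} by $\overline{m\,\varphi_{2,x}}$, integrate over $Q_T$ and take the real part. Integrating by parts in $x$, the dispersive term gives $\re\int_0^1 m\,\varphi_{2,xx}\overline{\varphi_{2,x}}\,dx=\tfrac12[m|\varphi_{2,x}|^2]_0^1-\tfrac12\int_0^1 m'|\varphi_{2,x}|^2\,dx$, whose boundary part is exactly $-\tfrac12|\varphi_{2,x}(t,0)|^2$ because $m(1)=0$ and $m(0)=1$. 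This is the observation term, and the choice $m(1)=m'(1)=0$ together with $\varphi_2(t,0)=0$ makes every other boundary contribution (those carrying the a priori uncontrolled traces $\varphi_{2,x}(t,1)$, or $\varphi_{2,x}(t,0)$ weighted by $\varphi_2$) vanish, so the Kirchhoff coupling at $x=1$ never enters. Collecting the terms yields the identity
\begin{equation*}
\frac{\gamma_2}{2}\int_0^T|\varphi_{2,x}(t,0)|^2\,dt=-\sigma\int_0^T\im\!\int_0^1 m\,\varphi_{2,t}\,\overline{\varphi_{2,x}}\,dx\,dt-\frac{\gamma_2}{2}\iintq m'|\varphi_{2,x}|^2\,dx\,dt+\frac{\alpha_2}{2}\iintq m'|\varphi_2|^2\,dx\,dt,
\end{equation*}
which is the content of the key identity of Lemma \ref{id_lm}.

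It then remains to bound the right-hand side by $C\norm{\zeta}^2_{\mc H}$, giving \eqref{admis}. The last two integrals are immediately estimated by $CT\sup_{t}\big(\int_0^1|\varphi_{2,x}|^2+\int_0^1|\varphi_2|^2\big)\le C\norm{\zeta}^2_{\mc H}$ via the energy bound above. The first integral is handled by writing $\im\int_0^1 m\,\varphi_{2,t}\overline{\varphi_{2,x}}\,dx$ as a total time derivative plus lower-order terms: integrating by parts in $t$ produces the endpoint contributions $\big[\im\int_0^1 m\,\varphi_2\overline{\varphi_{2,x}}\,dx\big]_{t=0}^{t=T}$, each bounded by $\sup_t\norm{\varphi(t)}_{\mc H}^2$, while the remaining volume term is rewritten by substituting $\varphi_{2,t}=\tfrac{i}{\sigma}(\gamma_2\varphi_{2,xx}-\alpha_2\varphi_2)$ from the equation and integrating by parts once more in $x$; here $m(1)=m'(1)=0$ and $\varphi_2(t,0)=0$ again annihilate all surviving boundary traces, leaving only integrals of $|\varphi_{2,x}|^2$, $|\varphi_2||\varphi_{2,x}|$ and $|\varphi_2|^2$ controlled by the energy. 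Assembling these bounds proves \eqref{admis} for regular $\zeta$, and density concludes.

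The main obstacle is precisely the time-derivative term $\int_0^T\im\int_0^1 m\,\varphi_{2,t}\overline{\varphi_{2,x}}$: it cannot be estimated directly, since there is no a priori control of $\varphi_{2,t}$ in a good space, and its reduction to endpoint plus energy-controlled terms is the delicate point where the regularity of $\zeta$ is genuinely used and where one must verify that no uncontrolled boundary trace at $x=1$ survives. This is exactly what the identity of Appendix \ref{apx2} is designed to supply, and the choice of a multiplier $m$ vanishing to second order at $x=1$ is what prevents the Kirchhoff coupling from spoiling the estimate.
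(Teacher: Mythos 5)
Your argument is correct, and it reaches the estimate by a route that differs in substance from the paper's. The paper applies the symmetrized multiplier $m\,\overline{\vphi_{j,x}}+\tfrac12 m'\overline{\vphi_j}$ to \emph{both} equations and sums (this is Lemma \ref{id_lm}), choosing $m(1)=0$, $m'(1)=1$, $m(0)>0$; the boundary term at $x=1$ that survives, namely $\tfrac12\re\sum_j\nu_j\int_0^T \vphi_{j,x}(t,1)\overline{\vphi_j(t,1)}\,dt$, is then shown to equal $-\tfrac{\alpha}{2}\int_0^T|\vphi_1(t,1)|^2dt\le 0$ precisely because of the Kirchhoff condition, so the coupling is used in an essential way to obtain a sign. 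You instead multiply only the second equation by $m\,\overline{\vphi_{2,x}}$ with $m$ vanishing to second order at $x=1$, so that no trace at $x=1$ ever appears and the Kirchhoff coupling is never invoked; the price is that the time-derivative term is not automatically a total derivative (that is exactly what the $\tfrac12 m'\overline{\vphi_j}$ correction in the paper's multiplier buys), and you must recover it by hand via the integration by parts in $t$ then in $x$ that you describe — I checked that this closes (the term reproduces itself with a sign, giving $2A=[\im\int m\vphi_2\overline{\vphi_{2,x}}]_0^T+\im\iint m'\vphi_2\overline{\vphi_{2,t}}$, and the last integral is reduced to energy-controlled quantities using the equation, with $m'(1)=0$ killing the new boundary trace). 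Your version is slightly more robust (it estimates each component's trace separately, independently of the boundary coupling), while the paper's is more economical once Lemma \ref{id_lm} is available. One cosmetic point common to both proofs: the bound obtained is $\int_0^T|\vphi_{2,x}(t,0)|^2dt\le C\norm{(\zeta_1,\zeta_2)}_{\mathcal H}^2$, i.e.\ the right-hand side of \eqref{admis} should carry a square for the homogeneity to match; your write-up states the squared version correctly.
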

\begin{proof}
Recall that $\Omega=(0,1)$ and choose $m\in C^2(\Omega)$ such that $m(1)=0, m(0)>0$, and   $m'(1)=1$. Thus, from \eqref{tr_identity}, it follows that
\begin{equation}\label{pr tr}
\begin{split}
\frac{1}{2}\sum_{j=1}^{2}\nu_j&\int_{0}^{T}|\vphi_{j,x}(t,0)|^2m(0) dt\leq C_1\norm{m}_{L^{\infty}(\Omega)}\bigg[\norm{(\vphi_1(T),\vphi_2(T))}^2_{[L^2(\Omega)]^2}\\
&+\norm{(\vphi_1(T),\vphi_2(T))}^2_{\mathcal{H}}+\norm{(\vphi_1(0),\vphi_2(0))}^2_{[L^2(\Omega)]^2}+\norm{(\vphi_1(0),\vphi_2(0))}^2_{\mathcal{H}}\bigg]\\&+C_2\norm{m}_{W^{1,\infty}(\Omega)}\left(\int_{0}^{T}\norm{(\vphi_1(t),\vphi_2(t))}^2_{\mathcal{H}}dt\right)\\
	&+C_3\norm{m}_{W^{2,\infty}(\Omega)}\left(\int_{0}^{T}\norm{(\vphi_1(t),\vphi_2(t))}_{[L^2(\Omega)]^2}\norm{(\vphi_1(t),\vphi_2(t))}_{\mathcal{H}}dt\right).
\end{split}
\end{equation}
Note that, using boundary conditions of \eqref{adjoint}, one can prove that the last term of the identity \eqref{tr_identity} is negative.
Moreover, using the classical conservation of the energy for the Schr\"odinger equation we have:
\begin{align}
\label{1 l2}	&\norm{(\vphi_1(t), \vphi_2(t))}_{[L^2(\Omega)]^2}=\norm{(\vphi_1(0), \vphi_2(0))}_{[L^2(\Omega)]^2}, \ \  \forall t\in [0,T],\\
\label{2 h1}	&\norm{(\vphi_1(t), \vphi_2(t))}_{\mathcal{H}}=\norm{(\vphi_1(0), \vphi_2(0))}_{\mathcal{H}}, \ \ \forall t\in [0,T].
\end{align}

Therefore from \eqref{pr tr}, we have the following result \eqref{admis} when $(\zeta_1, \zeta_2)$ is sufficiently smooth. Using a density argument we get the result when $(\zeta_1, \zeta_2)\in \mathcal{H}$, showing the lemma, and consequently (H3) is achieved.
\end{proof}

\subsection{Proof of Theorem \ref{th-main-1}}
In this section, we employ Urquiza's method to construct the feedback law for our system \eqref{linear} to ensure the exponential decay \eqref{ex_p}.

To do that, let us consider $ \Phi^1_0=(\vphi_0, \psi_0)$ and  $\Phi^2_0=(r_0, s_0)$ belonging of $\mathcal{H}$, and define the following bilinear form
$$a_w(\Phi^1_0, \Phi^2_0)=\int_{0}^{\infty}e^{-2\omega t}\psi_{x}(t,0) \overline{s_{x}(t,0) }\, dt,$$
where $\Phi^1=(\vphi, \psi)$ and $\Phi^2=(r, s)$ are the solutions of the following
systems respectively
\begin{equation*}
	\begin{cases}
		i\vphi_{t}+ \gamma_1\vphi_{xx} -\alpha_1\vphi = 0, &\text{ in } Q_T,\\
		i\sigma \psi_{t} +  \gamma_2\psi_{xx}-\alpha_2 \psi= 0, & \text{ in } Q_T,\\
		\vphi(t,1)=\psi(t,1), & \text{ in } (0,T),\\
		\gamma_1\vphi_{x}(t,1)+\frac{\gamma_2}{\sigma} \psi_{x}(t,1)+\alpha \vphi(t,1)=0, & \text{ in } (0,T),\\
		\vphi(t,0)=0,\, \psi(t,0)=0, & \text{ in } (0,T),\\
		\vphi(T,x)=\vphi_0(x), \  \psi(T,x)=\psi_0(x), & \text{ in } \Omega
	\end{cases} 
\end{equation*} 
and
\begin{equation*}
	\begin{cases}
		ir_{t}+ \gamma_1 r_{xx} -\alpha_1 r= 0, &\text{ in } Q_T,\\
		i\sigma s_{t} +  \gamma_2 s_{xx}-\alpha_2 s= 0, & \text{ in } Q_T,\\
		r(t,1)=s(t,1), & \text{ in } (0,T),\\
		\gamma_1 r_{x}(t,1)+\frac{\gamma_2}{\sigma} s_{x}(t,1)+\alpha r(t,1)=0, & \text{ in } (0,T),\\
		r(t,0)=0,\, s(t,0)=0, & \text{ in } (0,T),\\
		r(T,x)=r_0(x), \  s(T,x)=s_0(x), & \text{ in } \Omega. 
	\end{cases} 
\end{equation*}
Let us define the operator $\Lambda_{\omega}:\mathcal{H}\to \mathcal{H}'$ satisfying the following
\begin{equation}\label{lambda}
	\ip{\Lambda_{\omega} \Phi^1_0 } {\Phi^2_0}_{\mathcal{H}',\mathcal{H}}=a_{\omega}(\Phi^1_0 ,\Phi^2_0), \quad \forall  \Phi^1_0,\Phi^2_0 \in \mathcal{H}.
\end{equation}
Next, we see that
\begin{equation*}
\begin{split}
 a_w(\Phi^1_0 ,\Phi^2_0)=&\int_{0}^{\infty}e^{-2\omega t}\psi_{x}(t,0) \overline{s_{x}(t,0) }\,dt\\
=&\int_{0}^{\infty}e^{-2\omega t}{\mc B^*} \Phi^1(t) \,\,  \overline{{\mc B^*} \Phi^2(t)} \, dt\\
	=&\int_{0}^{\infty}e^{-2\omega t}\left({\mc B^*} S(T-t)^* S(-T)^*\Phi^1_0\right) \overline{\left({\mc B^*} S(T-t)^* S(-T)^*\Phi^2_0\right)}\, dt\\
	=&\int_{0}^{\infty}e^{-2\omega t}\left({\mc B^*}S(-t)^*\Phi^1_0\right) \overline{\left({\mc B^*} S(-t)^* \Phi^2_0\right)}\, dt.
	\end{split}
\end{equation*}
Therefore from \eqref{lambda}, we have
$$
	\ip{\Lambda_{\omega} \Phi^1_0 } {\Phi^2_0}_{\mathcal{H}',\mathcal{H}}=\int_{0}^{\infty}e^{-2\omega t}\ip{{\mc B^*}S(-t)^*\Phi^1_0}{\, {\mc B^*}S(-t)^* \Phi^2_0}_{\cplx} \, dt.
$$
Thanks to Theorem \ref{thm}, the operator $\Lambda_{\omega}$, defined by \eqref{lambda}, is coercive and isomorphism. Finally, let us define the operator ${F}_{\omega}: \mathcal{H} \to \mathbb{C}$ by
\begin{align*}
	{F}_{\omega}(\mathbf{z})=-\psi'_0(0) ,	\end{align*} where $\Phi^1_0=(\vphi_0, \psi_0)$ is the solution of the following Lax-Milgram  problem
\begin{equation*}
	a_{\omega}(\Phi^1_0, \Phi^2_0)=\ip{\mathbf{z}}{\Phi^2_0}, \ \forall \, \Phi^2_0 \in \mathcal{H}.
\end{equation*}
Hence, we obtain $$\ip{\Lambda_{\omega} \Phi^1_0 } {\Phi^2_0}_{\mathcal{H}',\mathcal{H}}=\ip{\mathbf{z}}{\Phi^2_0},  \ \forall \, \Phi^2_0 \in \mathcal{H}.$$ This gives $\Lambda_{\omega}\Phi^1_0=\mathbf{z}.$ It follows that $\Phi^1_0=\Lambda_{\omega}^{-1} \mathbf{z}.$ Thus we have ${F}_{\omega}=-{\mc B}^*\Lambda_{\omega}^{-1}.$
Thanks to Theorem \ref{thm}, rapid exponential stabilization for the system \eqref{linear} is established using the feedback law $h(t)={F}_{\omega}(u(t,\cdot), v(t,\cdot))$. More precisely, we get a positive constant $C$ such that the solution of \eqref{linear} satisfies the estimate \eqref{ex_p}, showing Theorem \ref{th-main-1}. \qed

\begin{remark}
We end this section with the following comments. 
\begin{itemize}
\item[i.] As mentioned before, an estimate similar to \eqref{admis} implies the well-posedness of the control system  \eqref{linearsource}. We infer the reader to see Proposition \ref{wl th}.
\item[ii.] Estimate \eqref{admis} and exact controllability (Theorem \ref{th-main}) together with the hypothesis (H2) ensures the uniform stabilizability property considering the dissipative feedback $u(t) =-\mc B^*y(t)$, where $\mc B^*$ is the adjoint control operator of $\mc B$ and $y = (u, v)$ is the solution of the closed-loop system \eqref{linear} with boundary conditions \eqref{boundary-1}-\eqref{control-v}. A very elegant and short proof of this fact can be found in \cite[Proposition 3.1]{Irena}.
\end{itemize}
\end{remark}


\section{Final comments and open problems}\label{sec4}

In this paper, we considered the boundary control problems for coupled Schrödinger equations through the Kirchhoff boundary conditions in a one-dimensional case. The first result is obtained showing a new Carleman estimate with boundary observation. Moreover, with this in hand, together with other hypotheses over the operator $\mathcal{A}$, the second result ensures that the solutions of the system decay exponentially with a decay rate of at least $e^{-2\omega t}$.

Concerning Theorems \ref{th-main} and \ref{th-main-1} it is natural to ask whether they remain valid in nonlinear framework \eqref{nonlinear}. Due to the lack of regularity, we are not able to extend our result for the nonlinear case yet, and this issue remains open.

Finally, we point out that in a forthcoming paper boundary conditions different of \eqref{boundary-1}-\eqref{control-u}/ \eqref{control-v} will be considered, which will give a more general view of the boundary control problems, at least, for the system \eqref{linear}.

	
	\section*{Statements and Declarations}

\subsection*{Acknowledgments} The authors are grateful to the anonymous reviewer for his/her constructive comments and valuable remarks.

The work of Kuntal Bhandari was supported by the Czech-Korean project  GA\v{C}R/22-08633J.  
 Capistrano-Filho was supported by CAPES grants numbers \linebreak 88881.311964/2018-01 and 88881.520205/2020-01,  CNPq grants numbers  307808/2021-1 and  \linebreak 401003/2022-1,  MATHAMSUD grant 21-MATH-03 and Propesqi (UFPE). Subrata Majumdar received financial support from the institute post-doctoral fellowship of IIT Bombay.  Part of this work was done while the second author visited Virginia Tech from January to July 2023 and Centro de Modelamiento Matemático (Santiago-Chile) in October 2023. The author thanks both institutions for their warm hospitality. 

\subsection*{Authors contributions:} Bhandari, Capistrano-Filho, Majumdar, and Tanaka work equality in Conceptualization; formal analysis; investigation; writing--original draft; writing--review and editing.

\subsection*{Conflict of interest statement} This work does not have any conflicts of interest.

\subsection*{Data availability statement} It does not apply to this article as no new data were created or analyzed in this study.


\appendix
	\section{Auxiliary results}\label{apx}	
	In this first appendix, we briefly discuss the well-posedness of the control system  \eqref{linear}.

	\subsection{Well-posedness results}	
	Consider the following  operator associated with the control system \eqref{linear}, given by 
	\begin{align}\label{op-A}
		\mc A=\begin{pmatrix}
			i \gamma_1 \partial_{xx}  - i\alpha_1 \mathbb I_d & \mathbf{0}\\ \\
			\mathbf{0}& i\frac{\gamma_2}{\sigma} \partial_{xx} - i \frac{\alpha_2}{\sigma}  \mathbb I_d
		\end{pmatrix}    ,
	\end{align} 
	with 
	\begin{equation}\label{dom-A}
	\begin{split}
		\mc D(\mc A)=\big\{(u_1, u_2)\in [H^2(\Omega)]^2 \mid \, & u_1(0)=u_2(0)=0, \  u_1(1)=u_2(1), \\
		&  \gamma_1 u_{1,x}(1)+\frac{\gamma_2}{\sigma}u_{2,x}(1)+\alpha u_1(1)=0 \big\}.
	\end{split}
	\end{equation}
	%
	%
	
	With this in hand, the first result shows that the operator in consideration is dissipative. 
	
	\begin{proposition}
		The operator $(\mc A, \mc{D}(\mc A))$ generates a strongly continuous unitary group in $[L^2(\Omega)]^2$.
	\end{proposition}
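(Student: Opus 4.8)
The plan is to verify the hypotheses of the Stone theorem by showing that $\mc A$ is skew-adjoint on $[L^2(\Omega)]^2$, which (by Stone's theorem) is equivalent to $\mc A$ generating a strongly continuous unitary group. First I would establish that $\mc A$ is \emph{skew-symmetric}, that is, $\re\ip{\mc A U}{U}_{[L^2]^2}=0$ for all $U=(u_1,u_2)\in\mc D(\mc A)$. Writing out the quadratic form, the terms coming from $-i\alpha_1\mathbb I_d$ and $-i\frac{\alpha_2}{\sigma}\mathbb I_d$ are purely imaginary (they contribute $-i\alpha_1\int|u_1|^2$ and $-i\frac{\alpha_2}{\sigma}\int|u_2|^2$), so they vanish after taking the real part. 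The leading terms $i\gamma_1\int u_{1,xx}\br{u_1}$ and $i\frac{\gamma_2}{\sigma}\int u_{2,xx}\br{u_2}$ are integrated by parts; the interior pieces $-i\gamma_1\int|u_{1,x}|^2$ and $-i\frac{\gamma_2}{\sigma}\int|u_{2,x}|^2$ are again purely imaginary and drop out, leaving only the boundary contributions to control.

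The decisive step is therefore the boundary analysis, and this is where the Kirchhoff condition in \eqref{dom-A} does its work. After integration by parts the surviving boundary term is, up to the factor $i$,
\begin{equation*}
	\gamma_1 u_{1,x}(1)\br{u_1(1)} + \frac{\gamma_2}{\sigma} u_{2,x}(1)\br{u_2(1)} - \gamma_1 u_{1,x}(0)\br{u_1(0)} - \frac{\gamma_2}{\sigma} u_{2,x}(0)\br{u_2(0)}.
\end{equation*}
The conditions $u_1(0)=u_2(0)=0$ kill the two terms at $x=0$. At $x=1$ the condition $u_1(1)=u_2(1)$ lets me factor out the common trace, and the Kirchhoff identity $\gamma_1 u_{1,x}(1)+\frac{\gamma_2}{\sigma}u_{2,x}(1)+\alpha u_1(1)=0$ converts the bracket into $-\alpha|u_1(1)|^2$, which is real. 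Multiplying by $i$ makes this purely imaginary, so its real part vanishes and skew-symmetry follows. I would then repeat the identical computation for the formal adjoint to confirm $\mc A^*=-\mc A$ on the same domain.

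To upgrade skew-symmetry to skew-adjointness, I would show that $\mc A^*$ is also skew-symmetric with the \emph{same} domain $\mc D(\mc A)$; equivalently, I can invoke the Lumer--Phillips route by proving that both $\pm\mc A$ are maximal dissipative, i.e. that $\lambda I-\mc A$ is surjective for some (hence all) $\lambda$ with $\re\lambda>0$. The surjectivity amounts to solving, for given $(f_1,f_2)\in[L^2(\Omega)]^2$, the elliptic boundary-value system $(\lambda-\mc A)(u_1,u_2)=(f_1,f_2)$ subject to the Kirchhoff and Dirichlet conditions; this is a standard coercive problem on $H^2(\Omega)^2\cap\mc D(\mc A)$ solvable by Lax--Milgram on the form space $\mc H$ defined in \eqref{space-H}, using that the associated sesquilinear form is continuous and coercive once the boundary term is seen to have the right sign.

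The main obstacle I expect is not the interior estimate but the careful bookkeeping of the boundary terms: one must track which parts are real and which are imaginary through the integration by parts, and verify that the Kirchhoff condition exactly cancels the cross term while leaving a sign-definite (real) remainder $-\alpha|u_1(1)|^2$. The positivity $\alpha>0$ and the common-value condition $u_1(1)=u_2(1)$ are precisely what makes this cancellation clean; any mismatch in the coefficients $\gamma_1$, $\frac{\gamma_2}{\sigma}$ would leave an uncontrolled boundary term. Once that cancellation is confirmed, unitarity of the group follows because skew-adjointness gives $\norm{e^{t\mc A}U}_{[L^2]^2}=\norm{U}_{[L^2]^2}$ via $\frac{d}{dt}\norm{e^{t\mc A}U}^2=2\re\ip{\mc A e^{t\mc A}U}{e^{t\mc A}U}=0$, and Stone's theorem supplies the strong continuity and group structure directly.
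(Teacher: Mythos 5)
Your proposal is correct and takes essentially the same route as the paper: the heart of both arguments is the integration by parts in which the Dirichlet conditions at $x=0$ kill two boundary terms and the Kirchhoff condition at $x=1$, combined with $u_1(1)=u_2(1)$, collapses the remaining boundary contribution to $-i\alpha|u_1(1)|^2$, whose real part vanishes, after which skew-adjointness and Stone's theorem give the unitary group. In fact you are more careful than the paper on one point: the paper passes from the quadratic-form computation to skew-adjointness by simply asserting $\mc D(\mc A)=\mc D(\mc A^*)$, whereas you explicitly supply the maximality step (surjectivity of $\lambda I-\mc A$ via Lax--Milgram on $\mc H$), which is the part of the argument the paper leaves implicit.
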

	\begin{proof}
		Let us consider $\mathbf U=(u,v) \in \mathcal D(\mathcal A)$. A simple computation gives that
				\begin{equation*}
				\begin{split}
		  \ip{\mc A\mathbf U}{\mathbf U}_{[L^2(\Omega)]^2}=&\re\bigg[i\gamma_1\int_{0}^{1}u_{xx}\overline{u}dx-i\alpha_1\int_{0}^{1}|u|^2dx+i\frac{\gamma_2}{\sigma}\int_{0}^{1}v_{xx}\overline{v}dx-i\frac{\alpha_2}{\sigma}\int_{0}^{1}|u|^2dx\bigg]\\
			=&\re\bigg[-i\gamma_1\int_{0}^{1}|u_{x}|^2dx-i\frac{\gamma_2}{\sigma}\int_{0}^{1}|v_{x}|^2dx+i\gamma_1u_x(1)\overline{u(1)}+i\frac{\gamma_2}{\sigma}v_x(1)\overline{v(1)}\bigg]\\
			=&\re\bigg[i\gamma_1u_x(1)\overline{u(1)}+i\frac{\gamma_2}{\sigma}v_x(1)\overline{v(1)}\bigg]\\=&\re\bigg[-i\alpha |u(1)|^2\bigg]=0.
			\end{split}
		\end{equation*}
		By using semigroup theory, $\mc A$ generates a strongly continuous unitary group on $[L^2(\Omega)]^2.$
		Moreover it can be easily checked that for all $(u,v)\in \mc D(\mc A)$, 
		$$\ip{\mc A(u,v)}{(u,v)}_{[L^2(\Omega)]^2}=-\ip{(u,v)}{\mc A(u,v)}_{[L^2(\Omega)]^2}$$ and also  
		$\mc D(\mc A)=\mc D(\mc A^*).$ Therefore $(\mc A, \mc D(\mc A))$ is a skew adjoint operator.
	\end{proof}
	
	%
	%
	
	\subsection{Adjoint system}
	Let us remember that the adjoint associated with the system \eqref{linear} with \eqref{boundary-1}--\eqref{control-u} or \eqref{boundary-1}--\eqref{control-v}, is given by \eqref{adjoint}, with given final data $\zeta:=(\zeta_1, \zeta_2)$ from some suitable Hilbert space. Note that, since $\mathcal{A}=-\mathcal{A}^*$ we have that $\mathcal{D(A^*)} = \mathcal{D(A)}$, so, the following result shows the well-posedness for the system \eqref{adjoint}.
	\begin{proposition}
		For given $\zeta:=(\zeta_1, \zeta_2)\in \mathcal H$, there exists a unique solution $\vphi:=(\vphi_1, \vphi_2)\in C([0,T]; \mc H)$ to the adjoint system \eqref{adjoint}  such that it satisfies
		\begin{align}\label{esti-adj}
			\|\vphi\|_{C([0,T];\mathcal H)} \leq C \|\zeta\|_{\mathcal H} ,
		\end{align}
		for some constant $C>0$. 
	\end{proposition}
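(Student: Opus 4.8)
The plan is to recognize the adjoint system \eqref{adjoint} as the abstract Cauchy problem driven by the operator $\mathcal{A}$ of \eqref{op-A}--\eqref{dom-A}, and then to obtain the $\mathcal{H}$-estimate \eqref{esti-adj} from a conserved energy. First I would observe that, written out, the evolution equations of \eqref{adjoint} coincide exactly with the action $\vphi_t = \mathcal{A}\vphi$ (indeed $i\vphi_{1,t}+\gamma_1\vphi_{1,xx}-\alpha_1\vphi_1=0$ rearranges to $\vphi_{1,t}=i\gamma_1\vphi_{1,xx}-i\alpha_1\vphi_1$, the first component of $\mathcal{A}\vphi$, and similarly for $\vphi_2$), while the three boundary conditions are precisely those defining $\mathcal{D}(\mathcal{A})$. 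Since the preceding proposition establishes that $\mathcal{A}$ generates a strongly continuous unitary group $\{S(t)\}_{t\in\mathbb{R}}$ on $[L^2(\Omega)]^2$, the backward problem with final datum $\vphi(T)=\zeta$ admits the unique mild solution $\vphi(t)=S(t-T)\zeta$, well-defined on $[0,T]$ because $S$ is a group. Unitarity then yields $\vphi\in C([0,T];[L^2(\Omega)]^2)$ together with conservation of the $[L^2]^2$-norm.

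The core of the argument is an energy identity at the $\mathcal{H}$-level. Assuming first $\zeta\in\mathcal{D}(\mathcal{A})$, so that $\vphi$ is classical and the computations are legitimate, I would introduce the weighted energy
\begin{equation*}
\tilde{H}(t) = \int_0^1\Big(\gamma_1|\vphi_{1,x}|^2 + \alpha_1|\vphi_1|^2 + \tfrac{\gamma_2}{\sigma}|\vphi_{2,x}|^2 + \tfrac{\alpha_2}{\sigma}|\vphi_2|^2\Big)dx + \alpha\,|\vphi_1(t,1)|^2,
\end{equation*}
and differentiate it in time. Integrating by parts and substituting the equations, the interior contributions become purely imaginary and vanish upon taking real parts; the boundary terms at $x=0$ disappear because $\vphi_j(t,0)=0$. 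What survives at $x=1$ is $2\re\big(\gamma_1\vphi_{1,t}\overline{\vphi_{1,x}}+\tfrac{\gamma_2}{\sigma}\vphi_{2,t}\overline{\vphi_{2,x}}\big)(t,1)$. Here the Kirchhoff structure is decisive: continuity $\vphi_1(t,1)=\vphi_2(t,1)$ (so $\vphi_{1,t}(t,1)=\vphi_{2,t}(t,1)$) combined with $\gamma_1\vphi_{1,x}(t,1)+\tfrac{\gamma_2}{\sigma}\vphi_{2,x}(t,1)=-\alpha\vphi_1(t,1)$ collapses this term to $-\alpha\,\tfrac{d}{dt}|\vphi_1(t,1)|^2$, which exactly cancels the time derivative of the last term of $\tilde H$. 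Hence $\tilde H$ is conserved along the flow.

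Granted conservation of $\tilde H$, I would verify that $\tilde H(t)$ is equivalent to $\|\vphi(t)\|_{\mathcal H}^2$. The lower bound is immediate from positivity of the coefficients, giving $\tilde H\ge\min\{\gamma_1,\gamma_2/\sigma\}\,\|\vphi\|_{\mathcal H}^2$; for the upper bound the two $L^2$ terms are absorbed by the Poincaré inequality (using $\vphi_j(t,0)=0$), and the trace satisfies $|\vphi_1(t,1)|^2\le\|\vphi_{1,x}\|_{L^2}^2\le\|\vphi\|_{\mathcal H}^2$ via the fundamental theorem of calculus. Combining, $\|\vphi(t)\|_{\mathcal H}^2\le C\,\tilde H(t)=C\,\tilde H(T)\le C'\|\zeta\|_{\mathcal H}^2$ uniformly in $t$, which is \eqref{esti-adj} for smooth data. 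A density argument over $\mathcal{D}(\mathcal{A})\subset\mathcal{H}$, together with this uniform bound, then promotes the mild solution to $\vphi\in C([0,T];\mathcal H)$ for every $\zeta\in\mathcal{H}$ while preserving the estimate.

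I expect the main obstacle to be the boundary bookkeeping in the energy identity: one must check carefully that, after integration by parts, the only surviving boundary contribution lies at $x=1$ and equals precisely the total derivative $-\alpha\,\tfrac{d}{dt}|\vphi_1(t,1)|^2$. This is the step where the positivity of $\alpha$ and the exact form of the coupling in \eqref{boundary-1} are used, and it is what makes $\mathcal{H}$, rather than merely $[L^2]^2$, the natural conserved quantity. The remaining analytical points—justifying the calculation on $\mathcal{D}(\mathcal{A})$ first and then passing to the limit—are routine.
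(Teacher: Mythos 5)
Your argument is correct and complete. The paper itself states this proposition without proof, so there is no written argument to compare against; the closest the paper comes is in the proof of Proposition \ref{irena1}, where it invokes ``classical conservation of the energy'' in the form \eqref{1 l2}--\eqref{2 h1}, asserting in particular that the $\mathcal{H}$-norm itself is conserved. Your computation is actually the more careful one: because of the boundary contribution at $x=1$, the $H^1$-seminorm is \emph{not} exactly conserved in general; what is conserved is the weighted energy $\tilde H$ containing the zeroth-order terms and the trace term $\alpha|\varphi_1(t,1)|^2$, and the Kirchhoff condition \eqref{boundary-1} is exactly what collapses the surviving boundary term to $-\alpha\,\tfrac{d}{dt}|\varphi_1(t,1)|^2$. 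Since $\tilde H$ is equivalent to $\|\cdot\|_{\mathcal H}^2$ (your Poincar\'e and trace bounds are correct, using $\varphi_j(t,0)=0$ and positivity of $\alpha,\alpha_1,\alpha_2$), this yields \eqref{esti-adj} with a constant $C$ that need not be $1$, which is all the proposition claims. The remaining ingredients --- identifying \eqref{adjoint} with $\varphi_t=\mathcal A\varphi$ for the operator \eqref{op-A}--\eqref{dom-A}, solving backward via the unitary group, performing the computation on $\mathcal D(\mathcal A)$, and concluding by density of $\mathcal D(\mathcal A)$ in $\mathcal H$ --- are all handled correctly. In short: the proof is valid, fills a gap the paper leaves open, and in passing shows that the exact equality in \eqref{2 h1} should really be read as a two-sided equivalence coming from the conserved functional $\tilde H$.
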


\section{Key lemma}\label{apx2}	
This second part of this appendix is devoted to presenting an essential lemma that is a key point in ensuring the hypothesis (H3) in Urquiza's approach.
\begin{lemma}\label{id_lm}
	First, consider a function $m \in C^2(0,1)$. Then the solution of \eqref{adjoint} satisfies the following identity:
	\begin{equation}\label{tr_identity}
\begin{split}
	&\frac{1}{2}\sum_{j=1}^{2}\nu_j\int_{0}^{T}|\vphi_{j,x}(t,0)|^2m(0) dt=\frac{1}{2}\sum_{j=1}^{2}\nu_j\int_{0}^{T}|\vphi_{j,x}(t,1)|^2m(1) dt\\&-\frac{1}{2}\im\left(\sum_{j=1}^{2} \int_{0}^{1} \br{\vphi_{j}(T,x)}m(x)\vphi_{j,x}(T,x)dx\right)+\frac{1}{2}\im\left(\sum_{j=1}^{2} \int_{0}^{1} \br{\vphi_{j}(0,x)}m(x)\vphi_{j,x}(0,x)dx\right)\\
&+\frac{1}{2}\im\left(\sum_{j=1}^{2} \int_{0}^{T} \vphi_{j,t}(t,1)m(1)\br{\vphi_{j}(t,1)}dt\right)-\frac{1}{2}\re\left(\sum_{j=1}^{2} \nu_j \int_{0}^{T}\int_{0}^{1} \vphi_{j,x} m'(x)\br{\vphi_{j,x}(t,x)}dxdt\right)\\
&-\frac{1}{2}\re\left(\sum_{j=1}^{2} \nu_j \int_{0}^{T}\int_{0}^{1} \vphi_{j,x} m''(x)\br{\vphi_{j}(t,x)}dxdt\right)+\frac{1}{2}\re\left(\sum_{j=1}^{2} \nu_j \int_{0}^{T} \vphi_{j,x}(t,1) m'(1)\br{\vphi_{j}(t,1)}dt\right),\\
		&-\frac{1}{2}\re\left(\sum_{j=1}^{2} \theta_j \int_{0}^{T} m(1)|{\vphi_{j}(t,1)}|^2dt\right),
		\end{split}
	\end{equation}
	where $\nu_1=\gamma_1, \nu_2=\frac{\gamma_2}{\sigma}, \theta_1=\alpha_1, \theta_2=\frac{\alpha_2}{\sigma}$.
\end{lemma}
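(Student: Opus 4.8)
The plan is to prove the identity \eqref{tr_identity} by the multiplier method, testing each equation of \eqref{adjoint} against $m(x)\br{\vphi_{j,x}}$ and taking real parts. First I would put both equations in the unified form $i\vphi_{j,t}+\nu_j\vphi_{j,xx}-\theta_j\vphi_j=0$, $j=1,2$, which is exactly what the normalizations $\nu_1=\gamma_1$, $\nu_2=\gamma_2/\sigma$, $\theta_1=\alpha_1$, $\theta_2=\alpha_2/\sigma$ encode (the second equation is divided by $\sigma$). Every computation is carried out for a fixed $j$ and summed at the end, and I would assume $(\zeta_1,\zeta_2)$ regular enough (say in $\mc D(\mc A)$, extending by density afterwards as in Proposition \ref{irena1}) so that each integration by parts is justified.

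Second, I multiply the $j$-th equation by $m(x)\br{\vphi_{j,x}}$, integrate over $Q_T$, take the real part, and sum over $j$. The dispersive term is immediate: since $\re(\vphi_{j,xx}\br{\vphi_{j,x}})=\tfrac12\partial_x|\vphi_{j,x}|^2$, integrating by parts in $x$ produces the two boundary fluxes $\tfrac{\nu_j}{2}m(1)|\vphi_{j,x}(t,1)|^2$ and $-\tfrac{\nu_j}{2}m(0)|\vphi_{j,x}(t,0)|^2$, plus an interior term in $m'|\vphi_{j,x}|^2$. Likewise $\re(\vphi_j\br{\vphi_{j,x}})=\tfrac12\partial_x|\vphi_j|^2$ turns the potential contribution into the boundary term $-\tfrac{\theta_j}{2}m(1)|\vphi_j(t,1)|^2$ (the $x=0$ endpoint dropping out since $\vphi_j(t,0)=0$) together with an interior term in $m'|\vphi_j|^2$. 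Moving the $x=0$ flux to the left already produces the left-hand side of \eqref{tr_identity}, the $x=1$ flux on the right, and the last ($\theta_j$) boundary term.

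The heart of the matter is the time term $-\im\int_0^T\int_0^1\vphi_{j,t}\,m\,\br{\vphi_{j,x}}\,dx\,dt$, which I would process by integrating by parts once in $x$ and once in $t$. The $x$-integration by parts, using $\vphi_j(t,0)=0$, throws off the boundary contribution $\im\int_0^T m(1)\vphi_{j,t}(t,1)\br{\vphi_j(t,1)}\,dt$; the $t$-integration by parts produces the two time-slice terms $\im\int_0^1 m\,\br{\vphi_j}\vphi_{j,x}\,dx$ evaluated at $t=T$ and $t=0$. The interior piece that remains still carries $\vphi_{j,t}$, and here I substitute the equation in the form $\vphi_{j,t}=i\nu_j\vphi_{j,xx}-i\theta_j\vphi_j$, so that $\im(\br{\vphi_j}\vphi_{j,t})=\nu_j\re(\vphi_{j,xx}\br{\vphi_j})-\theta_j|\vphi_j|^2$; a final integration by parts in $x$ of $\re(\vphi_{j,xx}\br{\vphi_j})$ (again with $\vphi_j(t,0)=0$) generates the $m''$ interior term, the $x=1$ boundary term $\tfrac{\nu_j}{2}m'(1)\re(\vphi_{j,x}(t,1)\br{\vphi_j(t,1)})$, another $m'|\vphi_{j,x}|^2$ interior contribution, and a $\theta_j\,m'|\vphi_j|^2$ piece that cancels the interior potential term found in the previous step.

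The main obstacle is precisely the bookkeeping in this last step: the time term is the only one demanding a double integration by parts combined with a use of the PDE, and one must track every boundary term generated at $x=1$ and at the two time slices while checking that the interior $m'|\vphi_j|^2$ contributions cancel and the $m'|\vphi_{j,x}|^2$ pieces recombine into a single coefficient. By contrast, the simplifications at $x=0$ are systematic: every term containing a bare $\vphi_j$ or $\br{\vphi_j}$ there vanishes by the Dirichlet condition $\vphi_j(t,0)=0$, leaving only the flux $|\vphi_{j,x}(t,0)|^2$, so that no boundary coupling at $x=1$ is needed to derive \eqref{tr_identity} itself; the Kirchhoff conditions enter only afterwards, in Proposition \ref{irena1}, to control the surviving $x=1$ boundary terms. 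Summing over $j$ and rearranging then yields \eqref{tr_identity}.
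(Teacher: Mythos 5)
Your proposal is correct and follows essentially the same route as the paper: the paper multiplies each equation by the combined multiplier $m\br{\vphi_{j,x}}+\tfrac12 m'\br{\vphi_j}$ and integrates by parts, which is algebraically identical to your two-step procedure of testing against $m\br{\vphi_{j,x}}$ alone and then substituting the PDE into the leftover interior term $\int_0^T\!\!\int_0^1 m'\,\im(\vphi_{j,t}\br{\vphi_j})\,dx\,dt$. Your observation that only the Dirichlet condition at $x=0$ is needed to derive \eqref{tr_identity} itself, the Kirchhoff coupling at $x=1$ entering only afterwards in Proposition \ref{irena1}, also matches the paper.
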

\begin{proof}
Multiply the equations \eqref{adjoint} by $(m\br{\vphi_{1,x}}+\frac{1}{2}\br{\vphi_1}m')$ and $(m\br{\vphi_{1,x}}+\frac{1}{2}\br{\vphi_1}m')$ respectively and using integration by parts along with boundary conditions, we have
\begin{equation*}
\begin{split}
\re\sum_{j=1}^{2} i \int_{0}^{T}&\int_{0}^{1} \vphi_{j,t}\left( \frac{1}{2}m'(x)\br{\vphi_{j}(t,x)}+m(x) \br{\varphi_{j,x}(t,x)}\right)dxdt=\\&-\frac{1}{2}\im\left(\sum_{j=1}^{2} \int_{0}^{1} \br{\vphi_{j}(T,x)}m(x)\vphi_{j,x}(T,x)dx\right)\\
&+\frac{1}{2}\im\left(\sum_{j=1}^{2} \int_{0}^{1} \br{\vphi_{j}(0,x)}m(x)\vphi_{j,x}(0,x)dx\right)\\&+\frac{1}{2}\im\left(\sum_{j=1}^{2} \int_{0}^{1} \vphi_{j,t}(t,1)m(1)\br{\vphi_{j}(t,1)}dt\right).
	\end{split}
\end{equation*}
Similarly, we have from the second term of both equations:
\begin{equation*}
\begin{split}
	\re\sum_{j=1}^{2} \nu_j \int_{0}^{T}\int_{0}^{1} \vphi_{j,xx}&\left( \frac{1}{2}m'(x)\br{\vphi_{j}(t,x)}+m(x) \br{\varphi_{j,x}(t,x)}\right)dxdt=\frac{1}{2}\sum_{j=1}^{2}\nu_j\int_{0}^{T}|\vphi_{j,x}(t,1)|^2m(1) dt\\
	&-\frac{1}{2}\re\left(\sum_{j=1}^{2} \nu_j \int_{0}^{T}\int_{0}^{1} \vphi_{j,x} m'(x)\br{\vphi_{j,x}(t,x)}dxdt\right)\\&-\frac{1}{2}\re\left(\sum_{j=1}^{2} \nu_j \int_{0}^{T}\int_{0}^{1} \vphi_{j,x} m''(x)\br{\vphi_{j}(t,x)}dxdt\right)\\
	&+\frac{1}{2}\re\left(\sum_{j=1}^{2} \nu_j \int_{0}^{T} \vphi_{j,x}(t,1) m'(1)\br{\vphi_{j}(t,1)}dt\right)\\
	&-\frac{1}{2}\left(\sum_{j=1}^{2} \nu_j \int_{0}^{T} m(0)|{\vphi_{j,x}(t,0)}|^2dt\right).
		\end{split}
\end{equation*}
Also, we have 
$$\re\sum_{j=1}^{2} \theta_j \int_{0}^{T}\int_{0}^{1} \vphi_{j}\left( \frac{1}{2}m'(x)\br{\vphi_{j}(t,x)}+m(x) \br{\varphi_{j,x}(t,x)}\right)dxdt=\left(\sum_{j=1}^{2} \theta_j \int_{0}^{T} m(1)|{\vphi_{j}(t,1)}|^2dt\right).$$
Putting together the previous relation \eqref{tr_identity} holds and the lemma is finished.
\end{proof}


\begin{thebibliography}{99}
	
\bibitem{AB2001} F. Alabau-Boussouira 
\emph{Indirect boundary observability of a weakly coupled wave system,} 
C. R. Acad. Sci. Paris Sér. I Math, 333 (2001), 645--650.

\bibitem{AB2003} 
F. Alabau-Boussouira, 
\emph{A two-level energy method for indirect boundary observability and controllability of weakly coupled hyperbolic systems},
SIAM J. Control Optim., 42:3 (2003), 871--906.

\bibitem{AB2012-1} F. Alabau-Boussouira, 
\emph{On some recent advances on stabilization for hyperbolic equations},
Lecture Note in Mathematics/C.I.M.E. Foundation Subseries Control of Partial Differential Equations, Springer Verlag, v. 2048,  (2012)
 
\bibitem{AB2012-2} 
F. Alabau-Boussouira, 
\emph{Controllability of cascade coupled systems of multi-dimensional evolution PDEs by a reduced number of controls},
 Comptes Rendus Mathématique Sér. I,  350 (2012), 577--582.
	
\bibitem{Fa}  
F. Alabau-Boussouira, 
\emph{Insensitizing exact controls for the scalar wave equation and exact controllability of 2-coupled cascade systems of PDEs by a single control},
Math. Control Signals Systems, 26 (2014), 1--46.

\bibitem{AB2015} F. Alabau-Boussouira, 
\emph{On the influence of the coupling on the dynamics of single-observed cascade systems of PDE’s},
Mathematical Control and Related Fields 5 (2015), 1--30.

\bibitem{ABCO2017} F. Alabau-Boussouira, J.-M. Coron, and G. Olive,
 \emph{Internal controllability of first-order quasilinear hyperbolic systems with a reduced number of controls},
 SIAM J. Control Optim. 55 (2017), 300--323.

\bibitem{ABL2011} F. Alabau-Boussouira and M. L\'eautaud, 
\emph{Indirect controllability of locally coupled systems under geometric conditions},
 Comptes Rendus Mathématique Sér. I,  349 (2011), 395--400.

\bibitem{FaLe}  
F. Alabau-Boussouira and M. L\'eautaud, 
\emph{Indirect controllability of locally coupled wave-type systems and applications},
 J. Math. Pures Appl., 99 (2013), 544--576.
	
\bibitem{AmBeGonTe}
F. Ammar-Khodja, A. Benabdallah, M. Gonz\'alez-Burgos, and L. de Teresa, 
\emph{Recent results on the controllability of linear coupled parabolic problems: a survey}, 
Math. Control Relat. Fields, 1:3 (2011), 267--306.
	
\bibitem{AnLi} 
J. Angulo and F. Linares,
\emph{Periodic pulses of coupled nonlinear Schr\"odinger equations in optics}, 
Indiana Univ. Math. J. 56: 2 (2007), 847--878.
	
\bibitem{barbosa} 
I. I. Barbosa,
\emph{The Cauchy problem for nonlinear quadratic interactions of the Schrödinger type in one-dimensional space}, 
Journal of Mathematical Physics, 7:59 (2018).

\bibitem{Bar-09}
A. Barhoumi, 
\emph{Rapid pointwise stabilization of vibrating strings and beams},
Bol. Soc. Paran. de Mat., 27:2 (2009), 43--59.


\bibitem{bhandari2021boundary} 
K. Bhandari, F. Boyer, and V.  Hern\'andez-Santamar\'ia, 
\emph{Boundary null-controllability of 1-D coupled parabolic systems with Kirchhoff-type conditions}, 
Mathematics of Control, Signals, and Systems, 33 (2021), 413--471.

     \bibitem{BaPu} 
L. Baudouin and J.-P. Puel, 
\emph{Uniqueness and stability in an inverse problem for the Schr\"{o}dinger equation},
 Inverse Problems 18 (2001), 1537--1554.
 
\bibitem{CaCeGa} 
R. A. Capistrano-Filho, E. Cerpa and F. Gallego, 
\emph{Rapid Exponential Stabilization of a Boussinesq System of KdV–KdV Type,}
Communications in Contemporary Mathematics, 25:03 (2023), 2150111.

\bibitem{CaGa} 
L. Cardoulis and P. Gaitan, 
\emph{Simultaneous identification of diffusion coefficient and the potential for the Schr\"{o}dinger operator with only one observation},
 Inverse Problems 26 (2010).


\bibitem{CeCr}
E. Cerpa, E. Cr\'epeau, 
\emph{IRapid exponential stabilization for a linear Korteweg–deVries equation,}
 Discrete Contin. Dyn. Syst., Ser. B, 11:3 (2009), 655--668

\bibitem{CoGaMo} 
J.-M. Coron, L. Gagnon, and M. Morancey,
\emph{Rapid stabilization of a linearized bilinear 1-D Schrödinger equation,}
Journal de Math\'ematiques Pures et Appliqu\'ees, 115 (2018), 24--73.

\bibitem{DeSalvo} 
R. DeSalvo, H. Vanherzeele, D. Hagan, M. Sheik-Bahae, G. Stegeman, and E. Van Stryland, 
\emph{Self-focusing and self-defocusing by cascaded second-order effects in KTP}, 
Opt. Lett. 17:1 (1992), 28--30.

\bibitem{DeRoLe} 
B. Dehman, J. Le-Rousseau, and M. L\'eautaud
\emph{Controllability of two coupled wave equations on a compact manifold}, 
Arch. Ration. Mech. Anal., 211:1 (2014), 113--187.

\bibitem{DolRus-1977} 
S. Dolecki and  D.L. Russell, 
\emph{A general theory of observation and control}, 
SIAM J. Control Opt. 15 (1977), 185--220.

\bibitem{FlaLaTri} 
F. Flandoli, I. Lasiecka and R. Triggiani, 
\emph{Algebraic Riccati equations with non-smoothing observation arising in hyperbolic and Euler–Bernoulli boundary control problems}, 
Ann. Mat. Pura Appl. 153 (1988), 307--382.

\bibitem{HaOzTa} 
N. Hayashi, T. Ozawa, and K. Tanaka, 
\emph{On a system of nonlinear Schr\"odinger equations with quadratic interaction}, 
Ann. Inst. Henri Poincare Non-Linear Anal., 30:4 (2013), 661--690.
	
\bibitem{JaKo}
P. Jaming and V. Komornik,
\emph{Moving and oblique observations of beams and plates},
Evolution Equations and Control Theory, 9:2 (2020), 447--468.

\bibitem{KaSu} 
Y. N. Karamzin and A. Sukhorukov, 
\emph{Nonlinear interaction of diffracted light beams in a medium with quadratic nonlinearity: Mutual focusing of beams and limitation on the efficiency of optical frequency converters}, 
JETP Lett. 20:11 (1974), 339--343.

\bibitem{Kleinman} 
D.L.Kleinman, 
\emph{An easy way to stabilize a linear constant system,}
IEEE Trans. Automat. Control, 15 (1970).

\bibitem{Kom-1997} 
V. Komornik, 
\emph{Rapid boundary stabilization of linear distributed systems},
SIAM J. Control Optim. 35 (1997), 1591--1613.

\bibitem{KoLo} 
V. Komornik and P. Loreti, 
Fourier Series in Control Theory (\textit{Springer-Verlag}, 2005).

\bibitem{Irena} 
I. Lasiecka and R. Triggiani, 
\emph{$L_2(\Sigma)$--regularity of the boundary to boundary operator $B^*L$ for hyperbolic and Petrowski PDEs}, 
Abstract and Applied Analysis, 19 (2003), 1061--1139.

\bibitem{LaTriZhang} 
I. Lasiecka, R. Triggiani, and X. Zhang, 
\emph{Carleman estimates at the $H^1(\Omega)$- and $L^2(\Omega)$-level for nonconservative Schr\"{o}dinger equations with unobserved Neumann B.C}, Arch. Inequal. Appl. 2 (2004), 215--338.

\bibitem{Laurent} 
C. Laurent, 
\emph{Internal control of the Schr\"odinger equation}, 
Math. Control Relat. Fields, 4:2 (2014), 161--186.

\bibitem{LiHa}
C. Li and N. Hayashi, 
\emph{Recent progress on nonlinear Schr¨odinger systems with quadratic interactions}, Sci. World J.  2014 (2014), 214821.

\bibitem{Lions} 
J.-L. Lions, 
\emph{Exact controllability, stabilizability, and perturbations for distributed systems}, 
Siam Rev. 30 (1988), 1--68.
	
\bibitem{LoMeTe} 
M. Lopez-Garcia, A. Mercado, and L. de Teresa, 
\emph{Null controllability of a cascade system of Schr\"odinger equations},
Electronic Journal of Differential Equations, 2016:74. (2016), 1--12.

\bibitem{RoTeb} 
C. Louis-Rose and L. Tebou, 
\emph{Carleman Estimates and Simultaneous Boundary Controllability of Uncoupled Wave Equations}, 
Appl Math Optim 88:49 (2023). 

\bibitem{Lukes} D. L. Lukes, 
\emph{Stabilizability and optimal control,}
Funkcial. Ekvac., 11 (1968), 39--50.
	
\bibitem{MeChTo}
C. Menyuk, R. Schiek, and L. Torner, 
\emph{Solitary waves due to $\xi(2)$:$\xi(2)$ cascading}, 
J. Opt. Soc. Am. B 11:12 (1994), 2434--2443.

\bibitem{MeOsRo} 
A. Mercado, A. Osses, and L. Rosier, 
\emph{Inverse problems for the Schr\"{o}dinger equation via Carleman inequalities with degenerate weights}, 
Inverse Problems 24 (2008), 1--18.

\bibitem{Miller} 
L. Miller, 
\emph{How violent are fast controls for Schr\"{o}dinger and plate vibrations?}, 
Arch. Rational Mech. Anal. 172 (2004), 429--456.

\bibitem{Phung} 
K.-D. Phung, 
\emph{Observability and controllability for Schr\"{o}dinger equations}, SIAM J. Control Optim. 40 (2001), 211--230.

\bibitem{RaTaTeTu} 
K. Ramdani, T. Takahashi, G. Tenenbaum, and M. Tucsnak, 
\emph{A spectral approach for the exact observability of infinite-dimensional systems with skew-adjoint generator,} 
J. Funct. Anal. 226 (2005), 193--229.

\bibitem{RoTe} 
L. Rosier and L. de Teresa, 
\emph{Exact controllability of a cascade system of conservative equations}, 
C. R. Math. Acad. Sci. Paris, Ser. I, 349 (2011), 291--296.

\bibitem{RoZhaSIAM} 
L. Rosier and B.-Y. Zhang, 
\emph{Local exact controllability and stabilizability of the nonlinear Schr\"{o}dinger equation on a bounded domain}, 
SIAM J. Control Optim. 48 (2009), 972--992.


\bibitem{RoZhaMMM} 
L. Rosier and B.-Y. Zhang, 
\emph{Control and Stabilization of the Nonlinear Schr\"{o}dinger Equation on Rectangles,} 
Mathematical Models, and Methods in Applied Sciences, 12 (2010), 2293--2347.

\bibitem{Slemrod}
M. Slemrod, 
\emph{A note on complete controllability and stabilizability for linear control systems in Hilbert space,}
 SIAM J. Control, 12 (1974), 500--508.

\bibitem{Tucsnak}
M. Tucsnak and G. Weiss, Observation and control for operator semigroups, 
\textit{Springer Science Business Media}, (2009).

\bibitem{YuYa} 
G. Yuan and M. Yamamoto, 
\emph{Carleman estimates for the Schr\"{o}dinger equation and applications to an inverse problem and an observability inequality}, 
Chinese Annals of Mathematics,  31:4 (2010), 555--578.

\bibitem{Urquiza} 
J. M. Urquiza, 
\emph{Rapid exponential feedback stabilization with unbounded control operators}, 
SIAM J. Control Optim. 43:6 (2005), 2233--2244.

\bibitem{Vest} 
A. Vest, 
\emph{Rapid stabilization in a semigroup framework,}
SIAM J. Control Optim. 51:5 (2013), 4169--4188.

\bibitem{Zuazua} 
E. Zuazua, 
\emph{Remarks on the controllability of the Schr\"{o}dinger equation, Quantum Control: Mathematical and Numerical Challenges}, 193--211, CRM Proc. Lecture Notes, Vol. 33 (Amer. Math. Soc., 2003).


		
	\end{thebibliography}
\end{document}